\definecolor{mPurple}{HTML}{8959a8}
\definecolor{darkred}{rgb}{0.9,0.1,0.1}
\definecolor{darkgreen}{rgb}{0,0.5,0}
\definecolor{darkblue}{rgb}{0, 0.23, 0.46}
\newtheorem{theorem}{Theorem}[section]
\newtheorem{lemma}[theorem]{Lemma}
\newtheorem{corollary}[theorem]{Corollary}
\newtheorem{proposition}[theorem]{Proposition}
\theoremstyle{remark}
\newtheorem{remark}[theorem]{Remark}
\renewenvironment{proof}[1][Proof]{ {\itshape \noindent {#1.}} }{$\Box$
\medskip}
\numberwithin{equation}{section}
\numberwithin{figure}{section}
\theoremstyle{plain}
\newcommand{\R}{\mathbb{R}}
\newcommand{\1}{\mathbf{1}}
\newcommand{\eps}{\varepsilon}
\newcommand{\dd}{\mathrm{d}}
\newcommand{\Prob}{\mathbb{P}}
\newcommand{\E}{\mathbb{E}}
\def\bbE{\mathbb E}
\def\bbVar{\mathbb Var}
\def\bbR{\mathbb R}
\def\rmCov{\mathrm {Cov}}
\def\rmd{\mathrm d}
\def\rmE{\mathrm E}
\def\e{\varepsilon}
\newcommand{\iu}{{\mathbf{i}\mkern1mu}} 
\begin{document}
\title{Time-dependent  averages of a critical long-range stochastic heat equation}
\author[Sefika Kuzgun]{Sefika Kuzgun}

\address[Sefika Kuzgun]{Max Planck Institute for Mathematics in the Sciences,\newline
Inselstraße 22, 04103 Leipzig, Germany.}
\email{sefika.kuzgun@mis.mpg.de}

\author[Ran Tao]{Ran Tao}
\address[Ran Tao]{Department of Mathematics, University of Maryland,\newline 4176 Campus Drive, College Park, MD, USA, 20742.}
\email{rantao16@umd.edu}

\begin{abstract}
We study the time-dependent spatial averages of a critical stochastic partial differential equation, namely the stochastic heat equation in dimension $d\geq 3$ with noise white in time and colored in space with covariance kernel $\|\cdot\|^{-2}$.
    The  solution to this SPDE is a singular measure and was constructed by Mueller and Tribe in \cite{mueller2004singular}. We show that the time-dependent spatial averages of this SPDE over a ball of radius $R$ at time $t$ have different limits under different space-time scales. In particular, when $t\ll R^2$, the central limit theorem holds; when $t= R^2$, the spatial average is a non-Gaussian random variable; when $t\gg R^2$, the spatial average becomes extinct.
    \bigskip

\noindent \textsc{Keywords:} long-range spatial correlation, scaling invariance, stochastic heat equation,  Wiener chaos decomposition

\end{abstract}
\maketitle


\section{Introduction}

\subsection{Main results}
We consider the following stochastic heat equation (SHE) in $\R^d$ for $d\geq 3$ starting from flat initial data \begin{equation}\label{eq:microshe}
\partial_t u(t,x)=\frac{1}{2}\Delta u(t,x) + \kappa u(t,x)\dot{F}(t,x), \quad u(0,x)=1,
\end{equation}
with $\kappa>0$ and $\dot{F}$ being a Gaussian noise that is white in time and has a spatially homogeneous covariance function described by Riesz kernel with index $2$. Formally,  with $\|\cdot\|$ denoting the Euclidean norm in $\R^d$, the covariance function of the noise $\dot{F}$ is expressed as
\[
\E \dot{F}(s,y)\dot{F}(t,x) = \delta(t-s)\|x-y\|^{-2}.
\]

The stochastic heat equation \eqref{eq:microshe} is a {borderline} critical stochastic PDE. Its solution is previously studied and given a meaning in \cite{mueller2004singular} for $0<\kappa < \frac{d-2}{2}$. 
In particular, \cite{mueller2004singular} defines the {(martingale problem) solution} to \eqref{eq:microshe}, and shows that when $0<\kappa < \frac{d-2}{2}$, such solutions  exist ({through Wiener chaos expansion}) and are unique in law. We defer the precise definitions of the solution to Section~\ref{se:solution} below. Moreover, \cite{mueller2004singular} shows that any  solution to \eqref{eq:microshe} is a nonnegative {singular} Radon measure valued process, which we shall denote by $\{u_t(\dd x):t\geq 0\}$. 

For any $R>0$, we use $B_R = \{x \in \R^d:\|x\|< R\}$ to denote the open ball with radius $R$ centered at zero in $\R^d$. The volume of the ball $B_R$ is given by $\omega_d R^d$, with $\omega_d$ being the volume of $B_1$.

Our main result is the following  three different limits of the spatial averages of $u_t(\dd x)$ under different space-time scales. We write $t_R$ to emphasize that the time scale could depend on the radius. Our results cover the cases of {(i)}  when $t$ is fixed and $R$ goes to infinity, or {(ii)}  when $t$ depends on $R$ and  $t,R$ both go to infinity. 

\begin{theorem}\label{thm:main} For  any  $0<\kappa < \frac{d-2}{2}$, let $\{u_t(\dd x):t\geq 0\}$ be {a} solution to \eqref{eq:microshe} as defined in \cite{mueller2004singular}. For $t=t_R>0$, the time-dependent spatial averages $u_{t_R}(B_R)$ have the following limits:
    \begin{enumerate}[leftmargin=*]
        \item\label{it:clt}  When $t_R = o (R^2)$, there exists a Gaussian random variable $\mathcal{N}(0,\sigma^2)$, with  variance  \begin{equation}\label{eq:1stvar}
        \sigma^2 = \kappa^2\int_{B_1}\int_{B_1} \frac{1}{\|x-y\|^2}\dd x \dd y = { \frac{\kappa^2 2^{\,d-1}\,\pi^{\,d-\frac12}\,\Gamma\!\left(\frac{d-1}{2}\right)}
{(d-2)\,\Gamma\!\left(\frac d2\right)\,\Gamma(d)}},
        \end{equation}  such that \begin{equation}\label{eq:clt}
\frac{1}{\sqrt{t_R}R^{d-1}} [u_{t_R}(B_R) -\omega_d R^d]\to \mathcal{N}(0,\sigma^2) \quad \text{in law},
    \end{equation}
    as $R \to \infty$. 
    \medskip
    \item\label{it:scaling}  When $t_R= cR^2$ for some $c>0$, we have \begin{equation}\label{eq:scaling}
\frac{1}{R^d} u_{t_R}(B_R) =u_c(B_1)  \quad \text{in law},
     \end{equation}
    for any $R>0$. Here $u_c(B_1)$ is a  non-degenerate random variable and is positive almost surely. 
        \medskip

\item \label{it:extinct} When $ t_R^{-1} = o(R^{-2})$, we have
\begin{equation}\label{eq:extinct}
\frac{1}{R^d} u_{t_R}(B_R) \to 0  \quad \text{in probability},
    \end{equation}
    as $R\to \infty$. 
    \end{enumerate}
\end{theorem}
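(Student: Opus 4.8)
The backbone of the whole argument is the \emph{scaling invariance} of \eqref{eq:microshe}. If $u$ solves \eqref{eq:microshe}, I would first check that for every $\lambda>0$ the rescaled field $v(t,x):=u(\lambda^2 t,\lambda x)$ again solves a copy of \eqref{eq:microshe} driven by a noise with the same covariance: this uses that the Riesz kernel $\|\cdot\|^{-2}$ is homogeneous of degree $-2$ and that a white-in-time noise rescales so that $\dot F(\lambda^2 t,\lambda x)$ has the law of $\lambda^{-2}\dot F(t,x)$, exactly cancelling the factor $\lambda^2$ produced by the time derivative. Since the martingale-problem solution is unique in law \cite{mueller2004singular}, this yields the distributional identity for the measure
\begin{equation}\label{eq:scalinginv}
u_{\lambda^2 t}(B_{\lambda R})\stackrel{d}{=}\lambda^d\,u_t(B_R),\qquad \lambda,t,R>0 .
\end{equation}
Taking $\lambda=R$, $t=c$ and unit ball radius in \eqref{eq:scalinginv} gives \eqref{eq:scaling} immediately, the positivity and non-degeneracy of $u_c(B_1)$ being inherited from the properties of $u_t(\dd x)$ established in \cite{mueller2004singular}. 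Identity \eqref{eq:scalinginv} also reduces the remaining two regimes to \emph{fixed-ball} limits: with $\e_R:=t_R/R^2$ one has $\tfrac{1}{\sqrt{t_R}R^{d-1}}[u_{t_R}(B_R)-\omega_d R^d]\stackrel{d}{=}\e_R^{-1/2}[u_{\e_R}(B_1)-\omega_d]$ for part \eqref{it:clt} (where $\e_R\to0$), and $R^{-d}u_{t_R}(B_R)\stackrel{d}{=}u_{T_R}(B_1)$ with $T_R:=t_R/R^2\to\infty$ for part \eqref{it:extinct}.

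For the short-time central limit theorem I would expand $u_\e(B_1)-\omega_d=\sum_{n\ge1}J_n^\e$ in Wiener chaos, the $n$-th term being the multiple integral with kernel $(\heatSemi_{\e-s_n}\1_{B_1})(y_n)\prod_{i=1}^{n-1}p_{s_{i+1}-s_i}(y_{i+1}-y_i)$ on the simplex $0<s_1<\dots<s_n<\e$. The first chaos $J_1^\e$ is Gaussian, and a direct computation of its variance in the noise Hilbert space gives
\begin{equation}\label{eq:V1}
\E[(J_1^\e)^2]=\kappa^2\int_0^\e\!\!\int_{\R^d}\!\int_{\R^d}(\heatSemi_a\1_{B_1})(y)\,(\heatSemi_a\1_{B_1})(y')\,\|y-y'\|^{-2}\,\dd y\,\dd y'\,\dd a ,
\end{equation}
which, since $\heatSemi_a\1_{B_1}\to\1_{B_1}$ as $a\to0$, equals $\sigma^2\e(1+o(1))$ with $\sigma^2$ as in \eqref{eq:1stvar}. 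Hence $\e^{-1/2}J_1^\e\to\mathcal N(0,\sigma^2)$ in law. It then remains to show the remainder is negligible, i.e.\ $\e^{-1}\sum_{n\ge2}\E[(J_n^\e)^2]\to0$; granting this, Slutsky's theorem yields \eqref{eq:clt}.

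The main obstacle is precisely this higher-chaos bound. I would estimate $\E[(J_n^\e)^2]$ by the $2n$-fold spatial integral of a product of $n$ Riesz kernels interlaced with iterated heat kernels over the time simplex, and aim to prove it is $O\!\big((C\e)^n/n!\big)$ for a constant $C=C(d,\kappa)$, so that summing gives $\sum_{n\ge2}\E[(J_n^\e)^2]=O(\e^2)=o(\e)$. This demands uniform-in-$n$ control of the operator $f\mapsto\int\|\cdot-y'\|^{-2}f(y')\dd y'$ composed with the heat flow, and it is exactly here that the subcriticality assumption $\kappa<\tfrac{d-2}{2}$ must enter to keep the series convergent; I expect to combine the homogeneity of the Riesz kernel (its Fourier symbol $\sim\|\xi\|^{-(d-2)}$) with the heat-kernel scaling $p_{\e a}(\sqrt\e\,w)=\e^{-d/2}p_a(w)$ to extract both the factor $\e^n$ and a summable combinatorial weight.

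For the extinction regime I would apply \eqref{eq:scalinginv} once more, now at fixed time: choosing $\lambda=\sqrt T$ gives $u_T(B_1)\stackrel{d}{=}T^{d/2}u_1(B_{T^{-1/2}})=\omega_d\,\dfrac{u_1(B_r)}{\omega_d r^d}$ with $r=T^{-1/2}\to0$. The right-hand side is the normalized mass that the \emph{single} random measure $u_1(\dd x)$ assigns to a shrinking ball about the origin. Since $u_1$ is a nonnegative singular Radon measure \cite{mueller2004singular}, the Besicovitch differentiation theorem gives $\lim_{r\to0}u_1(B_r(x))/(\omega_d r^d)=0$ for Lebesgue-a.e.\ $x$; because the flat initial data and spatially homogeneous noise make the law of $u_1$ translation invariant, a Fubini argument upgrades this to the single point $x=0$, whence $T^{d/2}u_1(B_{T^{-1/2}})\to0$ almost surely. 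As convergence to a constant in distribution is equivalent to convergence in probability, \eqref{eq:extinct} follows.
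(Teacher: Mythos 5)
Your parts \eqref{it:scaling} and \eqref{it:extinct} are essentially correct. Part \eqref{it:scaling} is exactly the paper's argument: scaling invariance plus uniqueness in law of the martingale-problem solution. For part \eqref{it:extinct} you take a genuinely different route: the paper, after the same scaling reduction, simply invokes the long-time extinction result of \cite{mueller2004singular} (their Theorem 2 ii), whereas you derive extinction from the almost-sure \emph{singularity} of $u_1$ via the Besicovitch differentiation theorem, translation invariance of the law, and Fubini. This works and is a nice self-contained alternative — it exhibits extinction at scales $t_R\gg R^2$ as literally the statement that a singular measure has vanishing Lebesgue density at the origin — provided you (i) justify translation invariance of the law of $u_1$ (it follows from uniqueness in law by the same argument as the scaling lemma) and (ii) note that monotonicity of $r\mapsto u_1(B_r)$ lets you pass from the a.e.\ statement to the limit along all $r\to 0$.

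The gap is in part \eqref{it:clt}, which is the technical heart of the theorem. Your scaling reduction and first-chaos computation are fine (they match the paper's Proposition~\ref{prop:firstchaos}, done there in Fourier variables at fixed time $t=1$). But the higher-chaos estimate is only announced as an aim, and the bound you aim for, $\E[(J_n^{\eps})^2]=O((C\eps)^n/n!)$, is provably false. By the Feynman--Kac identity \eqref{eq:chaosvar}, $\E[(J_n^{\eps})^2]=\frac{\kappa^{2n}}{n!}M_n$ with $M_n:=\int_{B_1}\int_{B_1}\rmE[\beta_{\eps}(x-y)^n]\,\dd x\,\dd y$, so your bound asserts $M_n\leq (C\eps/\kappa^2)^n$ for all $n$. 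By monotone convergence this would give $\int_{B_1}\int_{B_1}\rmE[\exp(\lambda\beta_{\eps}(x-y))]\,\dd x\,\dd y\leq e^{\lambda C\eps/\kappa^2}<\infty$ for \emph{every} $\lambda>0$, i.e.\ the chaos series would be bounded in $L^2(\Omega)$ for every coupling constant. This contradicts the divergence of the chaos expansion for couplings above $\tfrac{d-2}{2}$ (Remark~\ref{rm:comapreWNZ}; \cite[page 114]{mueller2004singular}). Consequently $\limsup_n (M_n/n!)^{1/n}\geq (\tfrac{2}{d-2})^2$: the moments $M_n$ grow \emph{factorially}, and after dividing by $n!$ the chaos variances decay only \emph{geometrically} in $n$, with ratio tending to $(\tfrac{2\kappa}{d-2})^2$ — for fixed $\eps$ there is no $\eps^n$ gain at large $n$ (the near-diagonal region $\|x-y\|\lesssim\sqrt{\eps}$ caps it). This also exposes an internal inconsistency in your plan: a factorial bound $(C(d,\kappa)\eps)^n/n!$ is summable for all $\kappa$, so the restriction $\kappa<\tfrac{d-2}{2}$, which you correctly say "must enter", \emph{cannot} enter such a bound; it can only enter through a geometric ratio being less than $1$.

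That geometric ratio is exactly what the paper's proof of Proposition~\ref{prop:higher} extracts, and it is not a soft estimate: one passes to Fourier variables (Lemma~\ref{le:nthchaos}), uses the bounds on $\varphi$ (Lemma~\ref{lem:phibound}), splits according to $\|\eta_1\|\gtrless 1$, and iterates the Riesz composition formula (Lemma~\ref{le:rieszcomp}) with exponents tuned so that the constants come out as $\big(\kappa^2 c_d k_{\cdot,2,d}\big)^n=(\tfrac{2\kappa}{d-2})^{2n}$-type geometric series, summable precisely when $\kappa<\tfrac{d-2}{2}$. Finally, your claimed total rate $O(\eps^2)$ is also not the right mechanism: what one can and need prove is that the sum of higher chaoses is $o(\eps)$, which in low dimensions is limited by the near-diagonal contribution rather than by a per-chaos $\eps^n$ decay.
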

We note that the construction of the solution via the chaos expansion \eqref{eq:chaosexpan} below and the scaling invariance of equation \eqref{eq:microshe} are the main tools to study the  spatial averages. 

We first briefly explain what we see from the results. 

For part~\eqref{it:clt} under the condition  $t_R=o(R^2)$, the first chaos is dominant when we take the limit $R\to \infty$, {and all higher order chaoses vanish under rescaling in \eqref{eq:clt}. 
In general, a central limit result does not mean that only the first Wiener chaos contributes to the limit.
 In fact,  there are examples of chaotic central limit theorems
in which the Gaussian limit of the spatial average of an SPDE arises  as a  sum of nontrivial components from all different chaoses (see \cite{nxzquanti}). Our setting, however, turns out to be more similar to the subcritical SPDEs with Riesz-kernel spatial covariance studied in \cite{nualartZheng} (see also Remark~\ref{rm:comapreWNZ} below): the central limit theorem is non-chaotic, in the sense that the limiting Gaussian random variable arises entirely from the first Wiener chaos.


In part~\eqref{it:scaling} when the time variable grows fast enough to be at the borderline $t_R \sim R^2$, the result shows that all chaos now always contributes, and the  random variable $\frac{1}{R^d}u_{t_R}(B_R)$ remains non-Gaussian even when $R\to \infty$. This shows that for this critical SPDE \eqref{eq:microshe}, when the averaging scale matches the diffusion scale,  higher-order interactions accumulate at the same order. 

In part~\eqref{it:extinct}, we have $t_R \gg R^2$, which means that the time scale grows faster than the square of the radius of the averaging region. In this regime, the extinction dominates.

Before proceeding, let us also briefly explain the scaling invariance  of equation \eqref{eq:microshe}. As we will later show in Lemma~\ref{le:scalinv}, for any $R>0, t\geq 0, \eps>0$, we have 
\[u_{t}(B_R)=\eps^{-d/2} u_{\eps t}(B_{\sqrt{\eps}R}) \quad \text{in law}.\]
This scaling invariance comes from the scaling invariance of the noise (see \eqref{eq:noisescalinginv}), and is central to the entire paper. In particular, the prefactor $\eps^{-d/2}$ comes from the  preservation of    total mass under the spatial rescaling $x\to \sqrt{\eps}x$.

Using this scaling invariance relation of \eqref{eq:microshe}, Theorem~\ref{thm:main}~part~\eqref{it:clt} is  equivalent to the following corollary.
\begin{corollary}\label{co:clttozero}
    For any $0<\kappa<\frac{d-2}{2}$,
    when $t_R=o(R^2)$, \begin{equation*}
\frac{1}{\sqrt{t_R/R^2}} [u_{t_R/R^2}(B_1) -\omega_d]\to \mathcal{N}(0,\sigma^2) \quad \text{in law},
    \end{equation*}
    as $R \to \infty$, with $\sigma^2$ as defined in \eqref{eq:1stvar}. 
\end{corollary}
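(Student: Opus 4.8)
The plan is to deduce Corollary~\ref{co:clttozero} from Theorem~\ref{thm:main}~part~\eqref{it:clt} by exploiting the exact parabolic scale invariance of \eqref{eq:microshe}, which is special to the critical Riesz index $2$. The key identity I aim to establish is that, for every $\lambda>0$ and every fixed $t,R>0$,
\begin{equation*}
u_{\lambda^2 t}(B_{\lambda R}) \overset{d}{=} \lambda^{d}\, u_{t}(B_R),
\end{equation*}
as an equality in law of random variables. Granting this, the corollary is a one-line substitution: taking $\lambda=R$, ball radius $1$, and time $s_R:=t_R/R^2$ gives $u_{t_R}(B_R)\overset{d}{=}R^d u_{s_R}(B_1)$, whence
\begin{equation*}
\frac{u_{t_R}(B_R)-\omega_d R^d}{\sqrt{t_R}\,R^{d-1}} \overset{d}{=} \frac{R}{\sqrt{t_R}}\big[u_{s_R}(B_1)-\omega_d\big]=\frac{1}{\sqrt{s_R}}\big[u_{s_R}(B_1)-\omega_d\big].
\end{equation*}
Since $t_R=o(R^2)$ is equivalent to $s_R\to 0$, the left-hand side converges in law to $\mathcal N(0,\sigma^2)$ by Theorem~\ref{thm:main}~part~\eqref{it:clt}, hence so does the right-hand side, which is precisely the assertion of the corollary.

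To establish the scaling identity I would first record the scaling of the data. Because time is one-dimensional and the spatial covariance is the Riesz kernel of index $2$, one has $\delta(\lambda^2 s)=\lambda^{-2}\delta(s)$ and $\|\lambda z\|^{-2}=\lambda^{-2}\|z\|^{-2}$, so the centered Gaussian field $(t,x)\mapsto \lambda^{2}\dot F(\lambda^2 t,\lambda x)$ has covariance $\delta(t-s)\|x-y\|^{-2}$; that is, $\lambda^{2}\dot F(\lambda^2\cdot,\lambda\cdot)\overset{d}{=}\dot F$. Combined with the heat-kernel scaling $p_{\lambda^2 t}(\lambda x)=\lambda^{-d}p_t(x)$ and the dilation-invariance of the flat initial datum $u(0,\cdot)\equiv 1$, a direct computation shows that the rescaled process $v(t,x):=u(\lambda^2 t,\lambda x)$ solves \eqref{eq:microshe} driven by the noise $\lambda^2\dot F(\lambda^2\cdot,\lambda\cdot)\overset{d}{=}\dot F$, with the same coefficient $\kappa$ and no residual power of $\lambda$ --- this is exactly the criticality of index $2$. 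Rather than re-examine the martingale problem, I would carry this out term by term on the Wiener chaos expansion \eqref{eq:chaosexpan}, where each chaos is an iterated stochastic integral of products of heat kernels against $\dot F$; the two scalings above then combine, after the change of variables $x\mapsto \lambda x$ in each integration node, to give the claimed transformation law for the associated measure tested against $\mathbf 1_{B_R}$, the Jacobian of that change of variables producing the factor $\lambda^d$. By uniqueness in law of the solution established in \cite{mueller2004singular}, this yields the displayed distributional identity for the measure-valued solution, and in particular for the ball averages $u_{t}(B_R)$.

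The main obstacle is making this scaling rigorous at the level of the singular, measure-valued solution rather than for a genuine density: one must interpret $u_{\lambda^2 t}(B_{\lambda R})$ as testing the dilated measure against $\mathbf 1_{B_R}$ and confirm that the dilation pushes a solution to a solution of the same martingale problem. Working through the chaos expansion side-steps this by reducing everything to explicit scalings of the deterministic kernels and of the noise, so that the only probabilistic input needed is the equality in law $\lambda^2\dot F(\lambda^2\cdot,\lambda\cdot)\overset{d}{=}\dot F$ together with uniqueness in law. I would also take care that the substitution producing the corollary uses only the marginal (single fixed $R$) version of the identity, which is all that convergence in law of $u_{s_R}(B_1)$ requires, so no joint-in-$(t,R)$ statement is needed.
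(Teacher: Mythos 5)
Your proposal is correct and is essentially the paper's own argument: the paper deduces Corollary~\ref{co:clttozero} from Theorem~\ref{thm:main}~part~\eqref{it:clt} by applying the scaling identity of Lemma~\ref{le:scalinv} with $\eps=R^{-2}$, which is exactly your displayed identity $u_{\lambda^2 t}(B_{\lambda R})\overset{d}{=}\lambda^d u_t(B_R)$ with $\lambda=R$. Your derivation of that identity (noise scale invariance $\lambda^2\dot F(\lambda^2\cdot,\lambda\cdot)\overset{d}{=}\dot F$, checked chaos by chaos or via the martingale problem, plus uniqueness in law) is also the same as the paper's proof of Lemma~\ref{le:scalinv}.
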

Therefore, Theorem~\ref{thm:main}~part~\eqref{it:clt} can also be understood as the central limit theorem of the random variables $u_t(B_1)$ as $t \to 0$.

    Another case that falls into the category of the extinction (Theorem~\ref{thm:main}~part~\eqref{it:extinct}) is when $R$ is fixed and $t$ goes to infinity. In fact, for any fixed radius $R>0$, 
    \[
    \frac{1}{R^d}u_t(B_R) \to 0 \quad \text{in probability,}\quad \text{as } t\to \infty.
    \]
    This result has already been mentioned in \cite[Theorem 2]{mueller2004singular}, and our Theorem~\ref{thm:main}~part~\eqref{it:extinct} is derived from this result together with the scaling invariance of equation \eqref{eq:microshe}.

\begin{remark}
    The solution theory of the stochastic heat equation \eqref{eq:microshe} when $\kappa\geq \frac{d-2}{2}$ remains open. When $\kappa>\frac{d-2}{2}$, the chaos series used to construct the solution to \eqref{eq:microshe} in \cite{mueller2004singular} (see \eqref{eq:chaosexpan} below) does not converge in $L^2$, putting spatial averages beyond the scope of this discussion.
    
    For the boundary case $\kappa=\frac{d-2}{2}$, a solution constructed via chaos expansion \eqref{eq:chaosexpan} does exists, though \cite{mueller2004singular} leaves open the possibility that  there are
solutions with a different law in this case. 
    One may think of \eqref{eq:microshe} with $\kappa=\frac{d-2}{2}$ as a critical stochastic heat equation with critical scaling. 
    {We believe that} Theorem~\ref{thm:main}~parts~\eqref{it:scaling} and \eqref{it:extinct} still hold for the boundary-case solution constructed via chaos expansion (see \cite[Remark 3 on page 105]{mueller2004singular}). However, our proof of Theorem~\ref{thm:main}~part~\eqref{it:clt} in Section~\ref{se:clt} relies on the condition  $0<\kappa<\frac{d-2}{2}$, and it remains open
    whether the central limit theorem \eqref{eq:clt} extends to cover solutions via chaos expansion in this boundary case $\kappa=\frac{d-2}{2}$.
\end{remark}
\begin{remark}
Another intriguing problem is to investigate the possibility of proving a quantitative central limit theorem under the same conditions as in Theorem~\ref{thm:main} part~\eqref{it:clt}. Previous works, such as \cite{nxzquanti, davarCLT, kim2022limit}, have established various quantitative central limit theorems for spatial averages of stochastic heat equations in one dimension, primarily using Malliavin calculus. Given that the solution to \eqref{eq:microshe} is not defined point-wise, it is of interest to determine whether this approach remains applicable where the Malliavin derivatives cannot be defined point-wise.
\end{remark}

We now introduce the context of our work from the following two perspectives.
\subsection{Spatial averages of SPDE}

The stochastic heat equation \eqref{eq:microshe} is a critical stochastic partial differential equation (SPDE). The noise $F$ has the  scaling invariance 
\begin{equation}\label{eq:noisescalinginv}
\dot{F}(t,x)={ {\eps^2}\dot{F}({\eps^2}{t},{\eps}{x})} \quad \text{in law}
\end{equation}
as a field in $(t,x)\in \R_{\geq 0}\times \R^d$
for any $\eps>0$, which is indicative that the solution will be a borderline distribution. {To see this scaling invariance, we note that for any $\phi, \psi$ in $C_c^{\infty}\left(\mathbb{R}_{\geq 0}\times \mathbb{R}^d\right)$,
\begin{align}\label{noisecovariance}
\rmCov\left[(\dot F,\phi),(\dot F,\psi)\right]=\int_{\bbR^{2d+1}} \phi(t,x)\psi(t,y) |x-y|^{-2} dxdydt .   
\end{align}
 Consider the linear map corresponding to parabolic scaling $\mathcal{E}(t,x)=(\e^2 t,\e x)$ and define $\dot F_\e=\dot F (\mathcal{E}\cdot)$. Also for $\phi,\psi$, let 
 $$\phi^{\e}(\cdot)=|\det \mathcal{E}|^{-1} \phi(\mathcal{E}^{-1}\cdot)=\e^{-(2+d)} \phi(\mathcal{E}^{-1}\cdot). $$
Then, we claim $\e^2\dot F_\e\overset{d}{=}  \dot F $. This can be verified by computing the covariance. By definition of the scaling for the distribution, and then using the covariance structure we have:\begin{align*}
  \rmCov\left[(\dot F_\e,\phi),(\dot F_\e,\psi)\right]= \rmCov\left[(\dot F,\phi^\e),(\dot F,\psi^\e)\right]=\int_{\bbR^{2d+1}} \phi^\e(t,x)\psi^\e(t,y) |x-y|^{-2} dxdydt.   \end{align*}
Then  by a change of variable, we get
\begin{align*}
  \rmCov\left[(\dot F_\e,\phi),(\dot F_\e,\psi)\right]&=\e^{-2(2+d)}\int_{\bbR^{2d+1}} \phi(t/\e^2, x/\e)\psi(t/\e^2, y/\e) |x-y|^{-2} dxdydt   \\&=
  \e^{-2(2+d)}\int_{\bbR^{2d+1}} \phi(s,u)\psi(s,v) | u- v|^{-2} \e^{2d }dudvds  \\
    &= \e^{-4} \rmCov\left[(\dot F,\phi),(\dot F,\psi)\right].
\end{align*}}

Due to the criticality of this SPDE, the solution theory falls outside the well-posedness theories of regularity structures \cite{hairer2014theory, otto2021priori}, paracontrolled distributions \cite{gubinelli2015paracontrolled}, and more recently flow equation \cite{duch2021flow}. Similar critical scaling occurs for the stochastic heat equation with space-time white noise in the critical dimension two, although only the scaling limit of the  
noise-mollified SHE (with a tuning parameter that slowly vanishes) is established in this set-up \cite{csz1, csz2}. 

{Our result considers the spatial average for a critical stochastic PDE with a measure-valued solution. We would like to mention that the critical $2d$ stochastic heat flow (SHF) \cite{csz2} is also measure-valued, but the asymptotic behaviors of its spatial averages remain open.}

Study of the spatial averages of stochastic PDEs started with the work \cite{huang2020central} in the context of stochastic heat equation in one dimension with a space-time white noise potential. Since this work there has been an extensive study of spatial averages of solutions in various different set-ups, see for example, \cite{nualartZheng,huang2020gaussian, nxzquanti} study the SHE with colored noise, \cite{nualart2020spatial} studies the SHE with fractional in time noise, \cite{10.1214/20-AIHP1069, bolanos2021averaging} study the spatial average of stochastic wave equation. Most of these work consider the spatial averages for fixed time, only exception are the works \cite{kim2022limit,davarCLT} where they study time-dependent averages.

Among all the above results, the limiting distribution of the spatial averages, if obtained, is always Gaussian. However, for our critical SPDE \eqref{eq:microshe}, the scaling invariance relation \eqref{eq:noisescalinginv} enables us to obtain a non-Gaussian spatial average \eqref{eq:scaling} under a specific (critical) space-time scale. This is the first non-Gaussian spatial average observed from an SPDE with Gaussian noise  to the best of our knowledge. Previously, it was only shown in \cite{waverosen, heatRosen} that for SPDEs with Rosenblatt noise, one can obtain non-Gaussian spatial averages.

Our result also shows that this transition of Gaussian \eqref{eq:clt} to non-Gaussian \eqref{eq:scaling} spatial averages  takes place when all the chaos begin to contribute in the spatial average limit.

The problem of obtaining non-Gaussian spatial average limit is of its own interest. It has long been conjectured, though not yet proven, that  non-Gaussian stable  distributions could be the limit of spatial averages of the $1d$ stochastic heat equation with space-time white noise when the average is taken over boxes with a proper time-dependent growth rate. 
A similar result was obtained for some parabolic Anderson model on lattice in \cite{ramirez}, though the  Gaussian spatial noise case also remains open.

As a final remark, we point out that in \cite[Theorem 1.7]{nualartZheng}, the authors proved a central limit theorem for the spatial averages of a stochastic heat equation with a spatial covariance function given by the Riesz kernel $\|\cdot\|^{-{p}}$ with index $0<{p}<2$ {in $d\geq 3$}. The SHE in \cite{nualartZheng} is not a critical SPDE and its point-wise solution is well-defined. While our central limit theorem in Theorem~\ref{thm:main}~part~\eqref{it:clt} (for $t$ being fixed) may look like a result of taking $p\to 2^-$ in \cite[Theorem 1.7]{nualartZheng}, it is \emph{not} a simple adaption of \cite{nualartZheng} as the SHE becomes critical when $p\to 2^-$. Our proof of Theorem~\ref{thm:main}~part~\eqref{it:clt} uses new estimates. In particular, our central limit theorem for $p=2$ only holds when the coupling constant is under condition $0<\kappa<\frac{d-2}{2}$, while the result from \cite{nualartZheng} would hold for any positive coupling constant before the noise.
We will discuss this difference further in Remark~\ref{rm:comapreWNZ}.

\subsection{Scaling limit  of long-range SHE}
One may also consider \eqref{eq:microshe} as the macroscopic scaling limit of  a long-range SHE. One closely related work in this line is \cite{gerolla2023fluctuations}, 
where the authors study the scaling limit of the long-range nonlinear stochastic heat equation in dimension $d\geq 3$. If we only consider \cite{gerolla2023fluctuations} for the linear case,  we may interpret its result and compare it with ours in the following sense.

Consider 
the stochastic heat equation  in $\R^d$ for $d\geq 3$ starting from flat initial data \begin{equation}
\partial_t v(t,x)=\frac{1}{2}\Delta v(t,x) + \kappa v(t,x)\dot{V}(t,x), \quad v(0,x)=1,
\end{equation}
with $\kappa>0$ and $\dot{V}$ being  a space-time mean zero Gaussian noise with formal covariance function
\[
\E \dot{V}(s,y)\dot{V}(t,x) = \delta(t-s)\left(\|x-y\|^{-p}\wedge 1\right),
\]
for some $p \in [2,d)$.

By \cite{mueller2004singular},
when $p=2$, $\kappa<\frac{d-2}{2}$, and $t_R=cR^2$ for some $c>0$, we have
\begin{equation}\label{eq:ournonGaussian}
\frac{1}{R^{d}}\int_{B_R}  v(t_R,x)  \dd x \to u_c{(B_1)}  \quad \text{in law,}
\end{equation}
as $R \to \infty$. The limit  is the same random variable as in \eqref{eq:scaling} and is  non-Gaussian. Given our result Theorem~\ref{thm:main}~part~\eqref{it:scaling}, another heuristic to understand this limit is that the spatial average of $v$ would approximate the spatial average of $u$ in large scales.

On the other hand, by  \cite[Theorem 1.3]{gerolla2023fluctuations} with $\sigma(x)=x$,  when $p \in (2,d)$, $\kappa$ is small enough, and $t_R=cR^2$ for some $c>0$, we have that for any smooth test function with compact support $g:\R^d\to \R$, 
\begin{equation}\label{eq:ghlgaussian}
\frac{1}{R^{d+1-\frac{p}{2}}}\int_{\R^d} \left[v(t_R,x)- {1}\right]g\left(x/R\right) \dd x \to \int_{\R^d} \mathcal{U}(c, x) g(x)\dd x \quad \text{in law},  
\end{equation}
as $R \to \infty$,
where $\mathcal{U}$ is the solution to the additive stochastic heat equation (Edwards-Wilkinson equation) 
$$
\partial_t \mathcal{U}(t, x)=\frac{1}{2} \Delta \mathcal{U}(t, x)+\kappa \eta(t, x), \quad \mathcal{U}(0, \cdot) \equiv 0,
$$
with noise $\eta$ being  a space-time mean zero Gaussian noise with formal covariance function
\[
\E \eta(s,y)\eta(t,x) = \delta(t-s)\|x-y\|^{-p}.
\]
In particular, the limit in \eqref{eq:ghlgaussian} is Gaussian.

This comparison shows that the case $p=2$ distinguishes itself from the cases with $2<p<d$ as its large-scale fluctuations remain multiplicative, thus  non-Gaussian, under diffusive  scaling. When $p=2$, all the chaos contribute to the fluctuation at the space-time scale $t=cR^2$, while when $2<p<d$, only the first chaos from the Riesz-kernel-correlated noise contributes after normalization.

Another related literature is \cite{gu2020fluctuations}, where the authors consider the scaling limit of mollified SHE in $d\geq 3$ with compactly supported covariance function. For this model, the fluctuations of the diffusively scaled solution converge to an Edwards-Wilkinson limit with space-time white noise and with a nontrivial effective variance. See also \cite{cosco2, cosco}.

\section{Preliminaries}
\subsection{Solution theory}\label{se:solution}
As discussed above, for the critical stochastic heat equation \eqref{eq:microshe}, the first challenge is to make sense of its solution.
In this subsection, we  introduce the definition of  the
 solution to \eqref{eq:microshe} and explain one specific construction of the solution from the Wiener chaos expansion. All of these results follow from \cite{mueller2004singular}.

Let $(\Omega, \mathcal{F}, \{\mathcal{F}\}_t,\Prob )$ be a filtered probability space. Let  $\mathcal{M}$ denote the nonnegative Radon measures on $\R^d$, $\mathcal{C}_c$ denote
the space of continuous functions
on $\R^d$ with compact support,  and $\mathcal{C}^k_c$ denote
the space of functions in $\mathcal{C}_c$ with $k$ continuous derivatives. We define $\mu(f):= \int_{\R^d} f(x) \mu(d x)$ for any $\mu \in \mathcal{M}$ and $f$ being integrable on $\R^d$. We also equip the space $\mathcal{M}$ with the vague topology. We denote the standard $d$-dimensional heat kernel as $G_t(x):=(2 \pi t)^{-d / 2} \exp \left(-|x|^2 / 2 t\right)$ for any $x \in \R^d$ and $t>0$.

In
\cite{mueller2004singular}, the {(martingale problem) solution} to \eqref{eq:microshe} is defined as any adapted continuous $\mathcal{M}$ valued process 
$\{u_t(\dd x):t\geq 0\}$ such that it satisfies the following three conditions:
\begin{enumerate}[leftmargin=*]
    \item $\Prob(u_0(\dd x)=\dd x)=1$;
        \medskip
    \item $\{u_t(\dd x):t\geq 0\}$ satisfies
\begin{equation*}\label{eq:l1mean}
     \E[u_t(f)]=\int_{\R^{d}} f(x) \dd x,
\end{equation*}
and 
\begin{equation}\label{eq:l2bound}
\mathbb{E} \left[
\left(u_t(f)\right)^2\right]\leq  C \int_{\R^{4d}} G_t\left(x-x'\right) G_t\left(y-y'\right) f\left(x'\right) f\left(y'\right)\left(1+\frac{t^\alpha}{|x-y|^\alpha\left|x'-y'\right|^\alpha}\right) \dd x \dd y \dd x'\dd y',
\end{equation}
for some positive constant $C=C(d,\kappa)$  and $\alpha=\frac{d-2}{2}-[(\frac{d-2}{2})^2-\kappa^2]^{1/2}$;
    \medskip
\item $\{u_t(\dd x):t\geq 0\}$ satisfies the martingale problem, i.e.,\ for all $f \in \mathcal{C}_c^2$, \begin{equation}\label{eq:martingale}
z_t(f)=u_t(f)-u_0(f)-\int_0^t \frac{1}{2} u_s(\Delta f) \dd s
\end{equation}
is a continuous local $\mathcal{F}_t$ -martingale with finite quadratic variation given by
\begin{equation}\label{eq:martingaleqv}
\begin{aligned}
\langle z(f)\rangle_t=\kappa^2 \int_0^t \int_{\R^{2d}} \frac{f(x) f(y)}{|x-y|^2} u_s(\dd y) u_s(\dd x) \dd s.
\end{aligned}
\end{equation}
\end{enumerate}

For any $0<\kappa < \frac{d-2}{2}$, \cite{mueller2004singular} shows that such  solution to \eqref{eq:microshe} exists and is unique in law. 
In particular, one construction of the  solution to \eqref{eq:microshe} is from the Wiener chaos expansion. We use this construction throughout our paper, so we explain it in detail.

For any $f \in \mathcal{C}_c$, define
$I^{(n)}_t(f)$ as the integral of $f$ over the $n$-th order Wiener chaos, given by
\[
I^{(n)}_t(f):=\int_{\bbR^{d}} f(x)    I^{(n)}(t,x)\rmd x,
\]
with $I^{(n)}(t,x)$ being defined  as $I^{(0)}(t,x) =1 $, and for any $n\geq 1$, setting $s_{n+1} = t$, $y_{n+1}=x$,  \[
I^{(n)}(t,x) :=\kappa^n \int_0^{s_{n+1}} \int_0^{s_n} \ldots \int_0^{s_2} \int_{\R^{n d}} \prod_{i=1}^n G_{s_{i+1}-s_i}\left(y_{i+1}-y_i\right) {F}\left(\dd s_i, \dd y_i \right),
\]
where the stochastic integrals with ${F}$ are well-defined as It\^o-Walsh integrals  \cite{walsh} 
 and can also be interpreted  as multiple Wiener-It\^o integrals {with respect to ${F}$} (see \cite[Section 1.1.2]{nualart2006malliavin}).
Note that one can easily extend the above definition of $I^{(n)}_t(f)$ to $f$ being indicator functions $\1_{B_R}$ for any $R>0$, 
{as the indicator functions convoluted with heat kernel are smooth.}

For any $0<\kappa < \frac{d-2}{2}$, by \cite{mueller2004singular}, there exists an adapted $\mathcal{M}$ valued process $\{\tilde{u}_t(\dd x):t\geq 0\}$ such that for any $t\geq 0$,
\begin{equation}\label{eq:chaosexpan}
\tilde{u}_t(f)=\sum_{n=0}^{\infty} I_t^{(n)}(f), \quad \text{for all } f \in \mathcal{C}_c,  \quad \Prob\text{-almost surely}.
\end{equation}
In fact, it is easy to check that for any $f\in \mathcal{C}_c$,
$\E\left[\sum_{n=0}^{\infty} I_t^{(n)}(f)\right]=\int_{\R^{d}} f(x) \dd x$,
and by \cite[Lemma 3 \& Lemma 5]{mueller2004singular}, when $0<\kappa < \frac{d-2}{2}$,
$\E\left[\left(\sum_{n=0}^{\infty} I_t^{(n)}(f)\right)^2\right]$ is bounded above by the right-hand-side of \eqref{eq:l2bound}. The second moment bound implies that there is a regularization \cite[Theorem 2.3.3]{ito} so that  for any $t>0$, there exists a random distribution $\tilde{u}_t(\dd x)$ satisfying \eqref{eq:chaosexpan}. 
Moreover, \cite{mueller2004singular} shows that for any $t>0$,  the random distribution satisfying \eqref{eq:chaosexpan} is a nonnegative singular Radon measure, and the $\mathcal{M}$ valued process $\{\tilde{u}_t(\dd x):t\geq 0\}$ has a modification continuous in time {(with respect to the vague topology)}. 
This continuous modification satisfies the martingale problem \eqref{eq:martingale}--\eqref{eq:martingaleqv}.
Therefore, combining all the results above, the continuous modification of the process $\{\tilde{u}_t(\dd x):t\geq 0\}$ with $\tilde{u}_t$  satisfying \eqref{eq:chaosexpan} is a martingale problem solution to \eqref{eq:microshe}.

We also record the following Feynman-Kac type formula for the  chaos expansion, where we refer to \cite[Lemma 5]{mueller2004singular} for details. By Urysohn's lemma and monotone convergence theorem, one can extend  \cite[Lemma 5]{mueller2004singular} to the indicator functions $f=\1_{B_R}$. 

Let
\begin{align}\label{eq:betat}
    \beta_t(x):=\int_0^t \frac{\dd s}{\left\|x+\sqrt{2}W_{s}\right\|^{2}},
\end{align} 
where $(W_t)_{t\geq 0}$ denotes a standard $d$-dimensional Brownian motion starting from $W_0=0$ independent of $\xi$. We let $\rmE$ denote the expectation with respect to this Brownian motion. 
Then for each $n\geq 1$ and any $t>0, R>0$, by It\^o isometry, with $s_{n+1}=t$ and $y_{n+1}=x_1$,$z_{n+1}=x_2$,
\begin{equation}\label{eq:chaosvar}
\begin{aligned}
    \bbE\left[I^{(n)}_t(\1_{B_R})^2\right] &= \kappa^{2n} \int_{B_R}\int_{B_R}\int_0^{s_{n+1}} \int_0^{s_n} \ldots \int_0^{s_2} \int_{\R^{2n d}} \\
    &\prod_{i=1}^n G_{s_{i+1}-s_i}\left(y_{i+1}-y_i\right) G_{s_{i+1}-s_i}\left(z_{i+1}-z_i\right) \frac{1}{\|y_i-z_i\|^2} \dd y_i \dd z_i \dd s_i\dd x_1 \dd x_2 \\
    &=  \frac{\kappa^{2n}}{n!} \int_{B_R}\int_{B_R}\rmE\left[  \prod_{i=1}^n \int_0^t \frac{1}{\left\|x-y+\sqrt{2}W_{s_i}\right\|^{2}} \dd s_i\right]\dd x\dd y
    \\
    &= \frac{\kappa^{2n}}{n!}\int_{B_R}\int_{B_R}\rmE[  \beta_t(x-y)^n]\dd x\dd y,
\end{aligned}
\end{equation}
where the second equation is obtained by  using  the finite dimensional distributions of the
Brownian bridges.

By \cite{mueller2004singular}, for any $0<\kappa<\frac{d-2}{2}$, the series sum
$\sum_{n=0}^{\infty}\bbE\left[I^{(n)}_t(\1_{B_R})^2\right]$
is convergent, and we have
\begin{align*}
\bbE[\tilde{u}_t(B_R)^2]&=\sum_{n=0}^{\infty} \bbE\left[I^{(n)}_t(\1_{B_R})^2\right]\\&=
\sum_{n=0}^{\infty}\frac{\kappa^{2n}}{n!}\int_{B_R}\int_{B_R}\rmE[  \beta_t(x-y)^n]\dd x\dd y=\int_{B_R}\int_{B_R}\rmE\left[ \exp{\left(  \kappa^2\beta_t(x-y)\right)}\right]\dd x\dd y.
\end{align*}

\subsection{Technical lemmas}\label{se:fourier}
Since we will conduct our main analysis on the Fourier space, we introduce some  auxiliary lemmas related to the Fourier transform of the Riesz kernel and the indicator functions. 

Throughout the paper, we take the Fourier transform of any tempered distribution $\mu$ on $\R^d$ in the convention of
\[\mathscr{F}\mu(x)  :=\int_{\R^d} e^{-\iu x \cdot \xi} \mu(\dd \xi).\]
Under this convention, for any functions $u, v \in 
L^2\left(\R^d \right)$, the Parseval formula is
\begin{equation*}\label{eq:parseval}
   \int_{\R^d} u(\xi)\overline{v(\xi)}\dd \xi  =\frac{1}{(2 \pi)^d} \int_{\R^d} \mathscr{F}u(x)\overline{\mathscr{F}v(x)}d x. 
\end{equation*}
When $\mu$ is in the space of  tempered distributions, its Fourier transform $\mathscr{F}\mu$ is well-defined through the Parseval formula.
By Bochner's theorem,  for any continuous positive definite function $\gamma: \mathbb{R}^d \rightarrow \mathbb{R} \cup\{\infty\}$, there exists a nonnegative tempered distribution $\mu$ on $\R^d$ so that $\gamma(x)=$ $\mathscr{F} \mu(x)$ and $\mu$ is known as the spectral measure.

In particular, for the Riesz kernel $\gamma(x)={1}/{\|x\|^2}$ on $\R^d$ with $d \geq 3$, its spectral measure is
\begin{equation}\label{eq:spectralmeasure}
\mu(\dd \xi) =  c_d {\|\xi\|^{-d+2}}\dd \xi, \quad \text{ with coefficient } \quad c_d=  \frac{\Gamma(\frac{d-2}{2})}{\pi^{d/2}2^2}.
\end{equation}
The spectral measure \eqref{eq:spectralmeasure} does not satisfy Dalang's condition \cite{dalang1999extending}:
\[
\int_{\mathbb{R}^d} \frac{\mu(d \xi)}{1+\|\xi\|^2}<\infty.
\]

We now give the composition formula of the Riesz kernels. This formula is well-known and we state it here for the convenience of the readers. We omit the proof, as it follows from an elementary computation with the relation of gamma and beta functions.
\begin{lemma}\label{le:rieszcomp}
    For any $0<\alpha,\beta,\alpha+\beta<d$ and $x\neq y$,
    \begin{align*}
      \int_{\bbR^d} \|x-z\|^{\alpha-d} \|z-y\|^{\beta -d}dz=k_{\alpha,\beta,d} \|x-y\|^{\alpha+\beta-d}, 
  \end{align*}
  with the coefficient \begin{align*}
    k_{\alpha,\beta,d} =\pi^{d/2}\frac{\Gamma(\frac{\alpha}{2})\Gamma(\frac{\beta}{2})\Gamma(\frac{d-\alpha-\beta}{2})}{\Gamma(\frac{d-\alpha}{2})\Gamma(\frac{d-\beta}{2})\Gamma(\frac{\alpha+\beta}{2})}.
  \end{align*}
\end{lemma}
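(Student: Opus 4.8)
The plan is to prove the composition formula by the classical subordination (Gaussian representation) of the Riesz kernel, which converts the $z$-integral into an elementary Gaussian integral and reduces the constant to a product of a Gamma and a Beta integral; this is the ``elementary computation with gamma and beta functions'' referred to above. (Alternatively, one could argue on the Fourier side: since $\mathscr{F}(\|\cdot\|^{-s})=\pi^{d/2}2^{d-s}\frac{\Gamma((d-s)/2)}{\Gamma(s/2)}\|\cdot\|^{-(d-s)}$ for $0<s<d$ in the convention of Section~\ref{se:fourier}, the convolution $\|\cdot\|^{\alpha-d}\ast\|\cdot\|^{\beta-d}$ becomes a product of powers of $\|\xi\|$ on the Fourier side, and inverting the transform pins down $k_{\alpha,\beta,d}$. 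I will carry out the real-space version.)

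First, by translation invariance the left-hand side depends only on $w:=x-y$, so I may set $y=0$; a scaling check $z\mapsto \lambda z$ already shows the integral is homogeneous of degree $\alpha+\beta-d$ in $\|w\|$, confirming the power $\|x-y\|^{\alpha+\beta-d}$ and leaving only the constant to be computed. To that end I would use the identity $r^{-s}=\Gamma(s/2)^{-1}\int_0^\infty \tau^{s/2-1}e^{-\tau r^2}\,d\tau$ (valid for $s>0$) with $s=d-\alpha$ and $s=d-\beta$ for the two factors. Substituting and interchanging the order of integration---justified by Tonelli's theorem, as every integrand is nonnegative---the inner integral becomes $\int_{\bbR^d}e^{-t\|x-z\|^2-u\|z\|^2}\,dz$, which by completing the square in $z$ equals $(\pi/(t+u))^{d/2}\exp(-\tfrac{tu}{t+u}\|w\|^2)$.

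It then remains to evaluate $\int_0^\infty\int_0^\infty t^{a-1}u^{b-1}(t+u)^{-d/2}e^{-\frac{tu}{t+u}\|w\|^2}\,dt\,du$ with $a=\frac{d-\alpha}{2}$ and $b=\frac{d-\beta}{2}$. The substitution $t=\rho s$, $u=\rho(1-s)$, with $\rho>0$, $s\in(0,1)$, and Jacobian $\rho$, decouples the two variables: the $\rho$-integral is a Gamma integral producing $\Gamma(\frac{d-\alpha-\beta}{2})\,(s(1-s))^{-\frac{d-\alpha-\beta}{2}}\|w\|^{\alpha+\beta-d}$, while the remaining $s$-integral is exactly the Beta integral $B(\tfrac{\beta}{2},\tfrac{\alpha}{2})=\Gamma(\tfrac{\beta}{2})\Gamma(\tfrac{\alpha}{2})/\Gamma(\tfrac{\alpha+\beta}{2})$. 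Collecting this against the prefactor $\pi^{d/2}/(\Gamma(\tfrac{d-\alpha}{2})\Gamma(\tfrac{d-\beta}{2}))$ yields precisely $k_{\alpha,\beta,d}$.

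The argument is essentially bookkeeping, and the point to watch is convergence rather than any genuine difficulty: the hypotheses $0<\alpha,\beta,\alpha+\beta<d$ are exactly what make every Gamma argument positive (namely $\tfrac{d-\alpha}{2},\tfrac{d-\beta}{2},\tfrac{d-\alpha-\beta}{2},\tfrac{\alpha}{2},\tfrac{\beta}{2},\tfrac{\alpha+\beta}{2}$) and guarantee that both the $\rho$-Gamma integral and the $s$-Beta integral converge, since the endpoint exponents $a-c=\tfrac{\beta}{2}$ and $b-c=\tfrac{\alpha}{2}$ (with $c=\tfrac{d-\alpha-\beta}{2}$) are positive. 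The only analytic subtlety---interchanging the order of integration across the kernel singularities at $z=x$ and $z=0$ and controlling the behavior at infinity---is handled uniformly by Tonelli, as all integrands are nonnegative, so no delicate estimates are needed.
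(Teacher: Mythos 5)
Your proof is correct, and it is exactly the ``elementary computation with the relation of gamma and beta functions'' that the paper alludes to while omitting the details: the Gaussian subordination $r^{-s}=\Gamma(s/2)^{-1}\int_0^\infty \tau^{s/2-1}e^{-\tau r^2}\,d\tau$, the completed-square Gaussian integral, and the substitution $t=\rho s$, $u=\rho(1-s)$ do produce $\Gamma\bigl(\tfrac{d-\alpha-\beta}{2}\bigr)\,B\bigl(\tfrac{\beta}{2},\tfrac{\alpha}{2}\bigr)$ against the prefactor $\pi^{d/2}/\bigl(\Gamma(\tfrac{d-\alpha}{2})\Gamma(\tfrac{d-\beta}{2})\bigr)$, which is precisely $k_{\alpha,\beta,d}$, with all interchanges justified by nonnegativity and all exponents kept positive by the hypotheses. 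Nothing is missing; your writeup supplies a complete proof of what the paper leaves as an omitted computation.
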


Another useful lemma is the Fourier transform of the indicator function over a ball of radius $R$ in $\R^d$. We refer to \cite[Lemma 2.1]{nualartZheng} for its proof.

\begin{lemma}[{\cite[Lemma 2.1]{nualartZheng}}]\label{lem:el1}
For any $R>0$, we have \[
\mathscr{F}\1_{B_R}(\xi)=\int_{B_R}e^{-\iu x\cdot \xi }\dd x=(2\pi R)^{d/2} \|\xi\|^{-d/2}J_{d/2}(R\|\xi\|), \quad  \text{for }  \xi\in\bbR^d,\]
where $J_{d/2}(\cdot)$ is the Bessel function of the first kind with order $d/2$:
$$
J_{d/2}(x)=\frac{(x / 2)^{d/2}}{\sqrt{\pi} \Gamma\left(\frac{d+1}{2} \right)} \int_0^\pi(\sin \theta)^{d} \cos (x \cos \theta) d \theta, \quad \forall x \in \mathbb{R}.
$$
\end{lemma}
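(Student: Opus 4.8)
The plan is to reduce the computation to a one-dimensional radial integral by symmetry, and then to recognize that integral as a Bessel function through a standard recurrence. The result is classical, so essentially all the work is bookkeeping.

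First I would use the scaling and rotational symmetries of the ball. The substitution $x=Ry$ gives $\int_{B_R}e^{-\iu x\cdot\xi}\dd x=R^{d}\int_{B_1}e^{-\iu y\cdot(R\xi)}\dd y$, so it suffices to evaluate the unit-ball integral and afterwards replace $\xi$ by $R\xi$. Because the integrand depends on $\xi$ only through the rotation-invariant pairing $x\cdot\xi$, I may rotate coordinates so that $\xi=re_d$ with $r=\|\xi\|$, reducing the problem to a function of the single scalar $r$.

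Next I would pass to spherical coordinates $x=\rho\omega$ with $\rho\in[0,1]$ and $\omega\in S^{d-1}$, writing $\theta$ for the polar angle between $\omega$ and $e_d$, so that $x\cdot\xi=\rho r\cos\theta$. Integrating out the remaining $(d-2)$-sphere of longitudes, whose total measure is $|S^{d-2}|=2\pi^{(d-1)/2}/\Gamma(\tfrac{d-1}{2})$, yields
\[
\int_{B_1}e^{-\iu x\cdot\xi}\dd x=|S^{d-2}|\int_0^1\rho^{d-1}\left(\int_0^\pi\cos(\rho r\cos\theta)(\sin\theta)^{d-2}\dd\theta\right)\dd\rho,
\]
where the imaginary part drops out under the symmetry $\theta\mapsto\pi-\theta$. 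By Poisson's integral representation of the Bessel function — the same formula quoted in the statement, now with order $\tfrac{d-2}{2}$ — the inner $\theta$-integral together with the factor $|S^{d-2}|$ is exactly the surface integral $\int_{S^{d-1}}e^{-\iu\rho\xi\cdot\omega}\dd\sigma(\omega)$, which equals $(2\pi)^{d/2}(\rho r)^{-(d-2)/2}J_{(d-2)/2}(\rho r)$ once the gamma factors are combined.

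Finally I would collapse the radial integral using the recurrence $\tfrac{\dd}{\dd x}\bigl[x^{\nu}J_\nu(x)\bigr]=x^{\nu}J_{\nu-1}(x)$, equivalently $\int_0^a x^{\nu}J_{\nu-1}(x)\dd x=a^{\nu}J_\nu(a)$, taken at $\nu=d/2$. After extracting $(2\pi)^{d/2}$ the radial integrand becomes $r^{-(d-2)/2}\rho^{d/2}J_{(d-2)/2}(\rho r)$, and the substitution $u=\rho r$ followed by the recurrence raises the order from $\tfrac{d-2}{2}$ to $\tfrac{d}{2}$, producing $r^{-d/2}J_{d/2}(r)$. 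Reinstating the factor $R^{d}$ and the replacement $\xi\mapsto R\xi$ then gives the claimed value $(2\pi R)^{d/2}\|\xi\|^{-d/2}J_{d/2}(R\|\xi\|)$. I do not expect a genuine obstacle, since the statement is classical; the only points requiring care are the gamma-function bookkeeping in the angular reduction and the correct index in the Bessel recurrence, together with the (harmless) interchange of the $\rho$- and $\theta$-integrations, justified by boundedness of the integrand over a compact domain.
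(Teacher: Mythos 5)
Your proposal is correct. Note that the paper does not prove this lemma at all: it simply cites \cite[Lemma 2.1]{nualartZheng}, so there is no in-paper argument to compare against; what you have written is the standard classical derivation (and essentially the one found in the cited reference and in harmonic-analysis texts). I checked the bookkeeping: the angular reduction is right, since Poisson's representation at order $\tfrac{d-2}{2}$ gives $\int_0^\pi\cos(x\cos\theta)(\sin\theta)^{d-2}\,\dd\theta=\sqrt{\pi}\,\Gamma\bigl(\tfrac{d-1}{2}\bigr)(x/2)^{-(d-2)/2}J_{(d-2)/2}(x)$, and multiplying by $|S^{d-2}|=2\pi^{(d-1)/2}/\Gamma\bigl(\tfrac{d-1}{2}\bigr)$ indeed yields $(2\pi)^{d/2}x^{-(d-2)/2}J_{(d-2)/2}(x)$, the Fourier transform of the sphere's surface measure. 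The radial step also checks out: $\rho^{d-1}\cdot\rho^{-(d-2)/2}=\rho^{d/2}$, and the substitution $u=\rho r$ together with $\int_0^r u^{d/2}J_{(d-2)/2}(u)\,\dd u=r^{d/2}J_{d/2}(r)$ (valid since $u^{d/2}J_{d/2}(u)\to0$ as $u\to0$) gives $(2\pi)^{d/2}r^{-d/2}J_{d/2}(r)$ for the unit ball, and the scaling $x=Ry$ restores the stated $(2\pi R)^{d/2}\|\xi\|^{-d/2}J_{d/2}(R\|\xi\|)$. Two trivial remarks: your argument assumes $\xi\neq0$ (at $\xi=0$ both sides agree by the small-argument asymptotics $J_{d/2}(x)\sim x^{d/2}/(2^{d/2}\Gamma(d/2+1))$, giving the volume $\omega_dR^d$), and the interchange of the $\rho$- and $\theta$-integrals is, as you say, harmless by compactness and boundedness.
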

By \cite[page 134]{lebedev1972special}, the Bessel function
$J_{d/2}(\cdot)$ has the following asymptotic behaviors:
\begin{align*}
        J_{d/2}(x) &\sim \sqrt{2/(\pi x)}\cos \left(x-\frac{(d +1)\pi}{4}\right) \quad \text{as  }x\to \infty,\\ \text{and} \quad 
      J_{d/2}(x) &\sim \frac{x^{d/2}}{2^{d/2} \Gamma(d/2+1)} \quad \text{as  }x\to 0.
    \end{align*}
Consequentially, we have
$$
\sup_{x \in \R}  \left|J_{d/2}(|x|)\right| <\infty,
$$
and there exists a constant $C>0$ so that
$$|J_{d/2}(|x|)| \leq C|x|^{-1/2}, \quad \text{for any } x \in \R\backslash\{0\}.$$
For $0<\alpha<d+1$, it is easy to verify that the following integral is finite:
\begin{equation}\label{eq:finiteJint}
    \int_{\bbR^d} \|\eta\|^{-2d+\alpha} J_{d/2}(\|\eta\|)^2d\eta<\infty.
\end{equation}

\section{Proof of Theorem~\ref{thm:main} and Corollary~\ref{co:clttozero}}
\subsection{Scaling}
The following lemma is from the scaling invariance relation of equation \eqref{eq:microshe}. Using it together with \cite[Theorem 2 \& 3]{mueller2004singular} proves Theorem~\ref{thm:main}~part~\eqref{it:scaling} and \eqref{it:extinct}.

\begin{lemma}\label{le:scalinv}
    For any radius $R>0$,
    any time $t\geq 0$, and any $\eps >0$, we have
\begin{equation}\label{eq:equalinlaw}
            u_{t}(B_R)=\eps^{-d/2} u_{\eps t}(B_{\sqrt{\eps}R}) \quad \text{in law}.
    \end{equation}
\end{lemma}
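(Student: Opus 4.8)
The plan is to reduce to the chaos-expansion solution and then prove \eqref{eq:equalinlaw} by an explicit change of variables in each Wiener chaos, combining the parabolic scaling of the heat kernel with the scale invariance \eqref{eq:noisescalinginv} of the noise. Since the solution is unique in law for $0<\kappa<\frac{d-2}{2}$, it suffices to establish the identity for $u=\tilde u$, the solution given by the chaos series \eqref{eq:chaosexpan}. Fix $\eps>0$. Applying \eqref{eq:noisescalinginv} with $\eps$ replaced by $\sqrt{\eps}$ shows that the rescaled noise $\dot F^\eps(\sigma,\eta):=\eps\,\dot F(\eps\sigma,\sqrt{\eps}\eta)$ has the same covariance, hence the same law, as $\dot F$; equivalently, at the level of the driving Gaussian random measure, $F(\dd(\eps\sigma)\,\dd(\sqrt{\eps}\eta))=\eps^{d/2}F^\eps(\dd\sigma\,\dd\eta)$ with $F^\eps\overset{\mathrm{law}}{=}F$. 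I would realize this rescaling as a unitary map $T_\eps$ of the underlying noise Hilbert space $\hilb$, so that its second quantization preserves the joint law of the whole chaos series, not merely of each term.

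Next I would compute the scaling of a single chaos term. Starting from $I^{(n)}_{\eps t}(\1_{B_{\sqrt{\eps}R}})=\int_{B_{\sqrt{\eps}R}}I^{(n)}(\eps t,z)\,\dd z$, the substitution $z=\sqrt{\eps}x$ produces a factor $\eps^{d/2}$ and reduces matters to $I^{(n)}(\eps t,\sqrt{\eps}x)$ for $x\in B_R$. In the $n$-fold stochastic integral defining $I^{(n)}(\eps t,\sqrt{\eps}x)$ I then change the time and space variables $s_i=\eps\sigma_i$ and $y_i=\sqrt{\eps}\eta_i$ for $1\le i\le n$ (with $\sigma_{n+1}=t$, $\eta_{n+1}=x$). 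The heat-kernel identity $G_{\eps\tau}(\sqrt{\eps}w)=\eps^{-d/2}G_\tau(w)$ applied to each of the $n$ factors pulls out $\eps^{-nd/2}$, while rewriting $\prod_{i=1}^nF(\dd(\eps\sigma_i)\,\dd(\sqrt{\eps}\eta_i))=\eps^{nd/2}\prod_{i=1}^nF^\eps(\dd\sigma_i\,\dd\eta_i)$ contributes $\eps^{nd/2}$. These powers cancel exactly, so $I^{(n)}(\eps t,\sqrt{\eps}x)$ equals the $n$-th chaos kernel of $\tilde u$ evaluated at $(t,x)$ but driven by $F^\eps$ in place of $F$. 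Integrating over $x\in B_R$ then gives $I^{(n)}_{\eps t}(\1_{B_{\sqrt{\eps}R}})=\eps^{d/2}\,I^{(n),\eps}_{t}(\1_{B_R})$, where the superscript $\eps$ indicates the same functional computed from $F^\eps$.

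Finally I would sum over $n$. Because $T_\eps$ is a unitary map of $\hilb$, it preserves all the $\hilb^{\otimes n}$ norms, so $\sum_n\bbE[(I^{(n),\eps}_t(\1_{B_R}))^2]=\sum_n\bbE[(I^{(n)}_t(\1_{B_R}))^2]<\infty$; hence the rescaled series converges in $L^2$, and combining with the previous identity yields $u_{\eps t}(B_{\sqrt{\eps}R})=\eps^{d/2}\sum_n I^{(n),\eps}_t(\1_{B_R})$. The routine parts here are the two change-of-variable computations and the covariance verification that $F^\eps\overset{\mathrm{law}}{=}F$. The one point requiring genuine care — and the main obstacle — is upgrading the termwise scaling to an equality in law for the full series: this is exactly why I phrase the noise rescaling as a unitary transformation, whose second quantization $\Gamma(T_\eps)$ acts measure-preservingly on $L^2(\Omega)$ and sends $I^{(n)}(g)$ to $I^{(n)}(T_\eps^{\otimes n}g)$, so that the joint law of $(I^{(n),\eps}_t(\1_{B_R}))_{n\ge0}$ coincides with that of $(I^{(n)}_t(\1_{B_R}))_{n\ge0}$ and the equality in law passes to the convergent sums. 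This gives $u_{\eps t}(B_{\sqrt{\eps}R})\overset{\mathrm{law}}{=}\eps^{d/2}\,u_t(B_R)$, which is \eqref{eq:equalinlaw}. As a consistency check, the same substitution applied to the Brownian functional $\beta_t$ gives $\beta_{\eps t}(\sqrt{\eps}x)\overset{\mathrm{law}}{=}\beta_t(x)$, matching the scaling predicted by the second-moment formula for $\bbE[\tilde u_t(B_R)^2]$.
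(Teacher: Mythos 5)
Your proof is correct, and it is genuinely different from the paper's own argument: it carries out in full the alternative route that the paper only flags in one sentence (``one can also check the invariance chaos by chaos''). The paper proceeds in the opposite direction: it defines the rescaled measure-valued process $v^{\eps}_t(A):=\eps^{-d/2}u_{\eps t}(\sqrt{\eps}A)$, asserts (omitting the computation) that $\{v^{\eps}_t: t\geq 0\}$ is again a martingale-problem solution of \eqref{eq:microshe} --- this is where \eqref{eq:noisescalinginv}, the degree $-2$ homogeneity of the kernel appearing in the quadratic variation \eqref{eq:martingaleqv}, and the choice of prefactor $\eps^{-d/2}$ enter --- and then concludes \eqref{eq:equalinlaw} from uniqueness in law. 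You instead invoke uniqueness in law once, to replace $u$ by the chaos solution $\tilde u$ of \eqref{eq:chaosexpan}, and then verify the scaling identity for $\tilde u$ by explicit parabolic changes of variables in each Wiener chaos; your bookkeeping of the powers of $\eps$ (the factor $\eps^{-nd/2}$ from the $n$ heat kernels cancelling the factor $\eps^{nd/2}$ from the rescaled noise measure, with the surviving $\eps^{d/2}$ coming from the outer spatial integral) is exactly right. The step you single out as the main obstacle is indeed the one a naive argument would miss: the termwise identity $I^{(n)}_{\eps t}(\1_{B_{\sqrt{\eps}R}})=\eps^{d/2}I^{(n),\eps}_t(\1_{B_R})$, with each $I^{(n),\eps}_t(\1_{B_R})$ separately equal in law to $I^{(n)}_t(\1_{B_R})$, would not by itself give equality in law of the sums; what is needed is that $F^{\eps}=F\circ T_\eps$ is again an isonormal Gaussian process on $\hilb$ (your $T_\eps$ is unitary), so that the whole family $(I^{(n),\eps}_t(\1_{B_R}))_{n\geq 0}$ has the same \emph{joint} law as $(I^{(n)}_t(\1_{B_R}))_{n\geq 0}$ and the identity passes to the $L^2$-convergent series. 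As for what each approach buys: the paper's argument is shorter, stays at the level of the martingale problem, and never touches the chaos construction, but it hides the actual work in the ``straightforward to check'' claim; yours is self-contained and explicit modulo standard facts about second quantization. Both ultimately rest on the Mueller--Tribe uniqueness-in-law theorem and hence on the same restriction $0<\kappa<\frac{d-2}{2}$, which is the regime in which the lemma is applied, so neither route is more general than the other.
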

\begin{proof}
{For any fixed $\eps>0, t\geq0$, define the measure $v^{\eps}_t(\dd x)\in \mathcal{M}$  so that for any Borel measurable set $A \subset \R^d$, we have $v^{\eps}_t(A)=\eps^{-d/2} u_{\eps t}(\sqrt{\eps}A)$, where $\sqrt{\eps}A =\{ \sqrt{\eps} x: x\in A\}$. By the scaling invariance relation \eqref{eq:noisescalinginv}, $\{v^{\eps}_t(\dd x): t \geq 0\}$ is also a solution to \eqref{eq:microshe}. The uniqueness in law of the solution to \eqref{eq:microshe} then implies \eqref{eq:equalinlaw}.
The result that$\{v^{\eps}_t(\dd x): t \geq 0\}$ is a solution to \eqref{eq:microshe} is straightforward to check, which we omit here.} One can also   check the invariance chaos by chaos,  using \eqref{eq:noisescalinginv}. A result more general has also been previously mentioned in \cite[Lemma 2]{mueller2004singular}. We note that the prefactor $\eps^{-d/2}$ is chosen here so that the rescaled measure-valued process is the solution to \eqref{eq:microshe} with the same initial data.
\end{proof}

\begin{proof}[Proof of Theorem~\ref{thm:main}~parts~\eqref{it:scaling} and \eqref{it:extinct}]
For any $R>0$ and $t=t_R>0$, applying Lemma~\ref{le:scalinv} with $\eps = R^{-2}$, we have
    \[
    \frac{1}{R^d}u_{t_R}(B_R)=  u_{t_R/R^2}(B_{1}) \quad \text{in law}.
    \]
    
    When $t=t_R=cR^2$ for some $c>0$, 
 we have proved \eqref{eq:scaling}. By \eqref{eq:chaosvar}, $\bbVar[u_c(B_1)]>0$. The positivity {of $u_c(B_1)$} follows from \cite[Theorem 3 ii]{mueller2004singular}.
    
    When $t=t_R$ with $t_R^{-1}=o(R^{-2})$, we have $t_R/R^2\to \infty$ as $R\to \infty$.
    By \cite[Theorem 2 ii]{mueller2004singular}, $u_{t}(B_{1})$ converges to zero in probability as $t\to \infty$. Thus the proof is complete.
\end{proof}

We also note that, assuming 
Theorem~\ref{thm:main}~part~\eqref{it:clt}
holds,
Corollary~\ref{co:clttozero} is another consequence of Lemma~\ref{le:scalinv} with $\eps = R^{-2}$.

\subsection{Central limit theorem}\label{se:clt}
In this subsection, we prove Theorem~\ref{thm:main}~part~\eqref{it:clt}. 

Without loss of generality, we only need to prove Theorem~\ref{thm:main}~part~\eqref{it:clt} for the case of $t_R=1$ being fixed. This is also due to the scaling invariance relation. In fact, for any $t=t_R>0$, applying Lemma~\ref{le:scalinv} with $\eps = {t_R}^{-1}$, we have
\begin{align*}
   &\frac{1}{\sqrt{t_R}R^{d-1}} \left[u_{t_R}(B_R) -\omega_d R^d\right] \stackrel{\text { law }}{=} \frac{1}{\sqrt{t_R}R^{d-1}} \left[t_R^{d/2}u_{1}\left(B_{{t_R^{-1/2}R}}\right) -\omega_d R^d\right] \\
    &= \frac{t_R^{d/2}}{\sqrt{t_R}R^{d-1}} \left[u_{1}\left(B_{t_R^{-1/2}R}\right) -\omega_d \left(t_R^{-1/2}R\right)^d\right] =\frac{1}{\left({t_R}^{-1/2}R\right)^{d-1}} \left[u_{1}\left(B_{t_R^{-1/2}R}\right) -\omega_d \left(t_R^{-1/2}R\right)^d\right]
   \\&= \frac{1}{ \tilde{R}^{d-1}} \left[u_{1}\left(B_{\tilde{R}}\right) -\omega_d  \tilde{R}^d\right],
\end{align*}
where we define $\tilde{R}:=t_R^{-1/2}R$ in the last equation. Note that when $t_R=o(R^2)$, we have $\tilde{R} \to \infty$ as $R \to \infty$, so  it is sufficient to prove \eqref{eq:clt}  for the fixed time case $t_R=1$.

Additionally, by \cite{mueller2004singular}, the (martingale problem) solution to \eqref{eq:microshe} is unique in law, so we only need to prove \eqref{eq:clt} for the process $\{\tilde{u}_t(\dd x):t\geq 0\}$, where  $\{\tilde{u}_t(\dd x):t\geq 0\}$ is the adapted $\mathcal{M}$ valued process defined by the chaos expansion \eqref{eq:chaosexpan}.

From now on, we will only consider the series 
$
\tilde{u}_1(B_R)=\sum_{n=0}^{\infty} I_1^{(n)}(\1_{B_R})$.
It is immediate that 
\[
\bbE[\tilde{u}_1(B_R)]=\bbE\left[I^{(0)}_1(\1_{B_R})\right] = \omega_dR^d,
\]
and
Theorem~\ref{thm:main}~part~\eqref{it:clt} would follow from the following two results.
\begin{proposition}\label{prop:firstchaos}
   For any $\kappa>0$, we have
    \begin{align*}
      \lim_{R\to \infty}  \frac{\bbVar\left[I_1^{(1)}(\1_{B_R})\right]}{R^{2d-2}}  = \sigma^2,
    \end{align*}
    where $\sigma^2$ is given by \eqref{eq:1stvar}.
\end{proposition}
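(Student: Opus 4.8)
The plan is to compute the first-chaos variance exactly on the Fourier side, extract the $R$-scaling by a dilation, and pass to the limit by dominated convergence. Since $I_1^{(1)}(\1_{B_R})$ is a mean-zero element of the first Wiener chaos, its variance equals its second moment, which is the $n=1$ instance of \eqref{eq:chaosvar}. Writing $h_s:=\1_{B_R}*G_{1-s}$ for the deterministic kernel that is integrated against the noise, It\^o isometry gives
\[
\Var\left[I_1^{(1)}(\1_{B_R})\right]=\kappa^2\int_0^1\int_{\R^{2d}}h_s(y)\,h_s(z)\,\|y-z\|^{-2}\,\dd y\,\dd z\,\dd s.
\]
Applying the spectral representation $\|y-z\|^{-2}=\mathscr{F}\mu(y-z)$ with $\mu$ as in \eqref{eq:spectralmeasure}, then Parseval, and finally $\mathscr{F}h_s(\xi)=\mathscr{F}\1_{B_R}(\xi)\,e^{-(1-s)\|\xi\|^2/2}$, this becomes, after carrying out the elementary time integral $\int_0^1 e^{-(1-s)\|\xi\|^2}\dd s=(1-e^{-\|\xi\|^2})/\|\xi\|^2$,
\[
\Var\left[I_1^{(1)}(\1_{B_R})\right]=\kappa^2\int_{\R^d}\frac{1-e^{-\|\xi\|^2}}{\|\xi\|^2}\,|\mathscr{F}\1_{B_R}(\xi)|^2\,\mu(\dd\xi).
\]

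Next I would substitute $|\mathscr{F}\1_{B_R}(\xi)|^2=(2\pi R)^d\|\xi\|^{-d}J_{d/2}(R\|\xi\|)^2$ from Lemma~\ref{lem:el1} together with $\mu(\dd\xi)=c_d\|\xi\|^{-d+2}\dd\xi$, and then dilate via $\eta=R\xi$. Counting the powers of $R$ collapses the prefactor to $R^{2d}$ while the heat factor turns into $1-e^{-\|\eta\|^2/R^2}$, so that
\[
\frac{\Var\left[I_1^{(1)}(\1_{B_R})\right]}{R^{2d-2}}=\kappa^2 c_d(2\pi)^d\int_{\R^d}R^2\left(1-e^{-\|\eta\|^2/R^2}\right)\|\eta\|^{-2d+2}\,J_{d/2}(\|\eta\|)^2\,\dd\eta,
\]
where I have already absorbed one factor $\|\eta\|^2$ into the exponent to display the target profile.

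I would then pass to the limit $R\to\infty$ by dominated convergence. Pointwise one has $R^2(1-e^{-\|\eta\|^2/R^2})\to\|\eta\|^2$, and the elementary bound $1-e^{-u}\le u$ gives the uniform domination $R^2(1-e^{-\|\eta\|^2/R^2})\le\|\eta\|^2$, so the integrand is controlled by $\|\eta\|^{-2d+2}J_{d/2}(\|\eta\|)^2$, which is integrable by \eqref{eq:finiteJint} with $\alpha=2$ (legitimate since $d\ge 3$ makes $2<d+1$). Hence the limit equals $\kappa^2 c_d(2\pi)^d\int_{\R^d}\|\eta\|^{-2d+2}J_{d/2}(\|\eta\|)^2\,\dd\eta$. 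To recognize this as $\sigma^2$, I would run the same Fourier computation backwards on the $R=1$ double integral: expanding $\|x-y\|^{-2}=\mathscr{F}\mu(x-y)$ yields $\int_{B_1}\int_{B_1}\|x-y\|^{-2}\dd x\,\dd y=\int_{\R^d}|\mathscr{F}\1_{B_1}(\xi)|^2\mu(\dd\xi)$, and inserting Lemma~\ref{lem:el1} (with $R=1$) and \eqref{eq:spectralmeasure} reproduces precisely $c_d(2\pi)^d\int_{\R^d}\|\xi\|^{-2d+2}J_{d/2}(\|\xi\|)^2\dd\xi$, so that the limit is exactly $\sigma^2$ as in \eqref{eq:1stvar}.

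I expect the only genuinely delicate point to be the dominated-convergence step: the dilation must be arranged so that the diverging prefactor $R^{2d}$ is exactly balanced by the $R^{-2}$ hidden in $1-e^{-\|\eta\|^2/R^2}$, leaving an $R$-independent integrable profile governed by \eqref{eq:finiteJint}. Everything else is bookkeeping with the Fourier transforms, the spectral measure, and the explicit time integral.
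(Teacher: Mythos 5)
Your proof is correct and follows essentially the same route as the paper: your It\^o-isometry/Parseval computation is exactly the $n=1$ case of the paper's Lemma~\ref{le:nthchaos} (your profile $\frac{R^2(1-e^{-\|\eta\|^2/R^2})}{\|\eta\|^2}$ is the paper's $\varphi(\eta/R)$, and your bound $1-e^{-u}\le u$ is the paper's $\varphi\le 1$), followed by the same dominated-convergence step via \eqref{eq:finiteJint} and the same Fourier identification of $\sigma^2$, which you helpfully spell out where the paper only calls it a pure computation. One typo to fix: in your displayed formula for $\bbVar\left[I_1^{(1)}(\1_{B_R})\right]/R^{2d-2}$ the kernel should be $R^2\left(1-e^{-\|\eta\|^2/R^2}\right)\|\eta\|^{-2d}J_{d/2}(\|\eta\|)^2$, equivalently $\frac{R^2(1-e^{-\|\eta\|^2/R^2})}{\|\eta\|^2}\,\|\eta\|^{-2d+2}J_{d/2}(\|\eta\|)^2$, rather than $R^2\left(1-e^{-\|\eta\|^2/R^2}\right)\|\eta\|^{-2d+2}J_{d/2}(\|\eta\|)^2$; your subsequent domination and limit statements already use the correct form, so the argument is unaffected.
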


\begin{proposition}\label{prop:higher}
   For  any $0<\kappa<\frac{d-2}{2}$, we have \begin{align*}
       \lim_{R\to \infty} \frac{1}{ R^{2d-2}}\sum_{n=2}^{\infty}  \bbVar\left[I_1^{(n)}(\1_{B_R})\right] =0.
   \end{align*} 
\end{proposition}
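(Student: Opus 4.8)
The plan is to sum the variance formula \eqref{eq:chaosvar} over $n\ge 2$ in closed form and then estimate the resulting spatial integral by separating the behavior of the integrand near the diagonal from its behavior far from it. Since $\bbE[I_1^{(n)}(\1_{B_R})]=0$ for $n\ge 1$, we have $\bbVar[I_1^{(n)}(\1_{B_R})]=\bbE[I_1^{(n)}(\1_{B_R})^2]$, and by \eqref{eq:chaosvar} together with Tonelli's theorem (all terms are nonnegative) and the convergence of $\sum_n\bbE[I_1^{(n)}(\1_{B_R})^2]$ recorded above,
\begin{equation*}
\sum_{n=2}^\infty \bbVar\big[I_1^{(n)}(\1_{B_R})\big] = \int_{B_R}\int_{B_R} H(x-y)\,\dd x\,\dd y, \qquad H(z):=\rmE\big[e^{\kappa^2\beta_1(z)}\big]-1-\kappa^2\rmE[\beta_1(z)]\ge 0.
\end{equation*}
First I would reduce this double integral to a one-dimensional problem using the self-intersection volume of the ball: writing $V_R(z):=|B_R\cap(B_R+z)|$, one has $\int_{B_R}\int_{B_R}H(x-y)\,\dd x\,\dd y=\int_{\bbR^d}H(z)V_R(z)\,\dd z$, and since $0\le V_R(z)\le \omega_d R^d$ with $V_R$ supported on $\{\|z\|\le 2R\}$,
\begin{equation*}
\sum_{n=2}^\infty \bbVar\big[I_1^{(n)}(\1_{B_R})\big]\ \le\ \omega_d R^d\int_{\|z\|\le 2R}H(z)\,\dd z.
\end{equation*}

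The core of the argument is then two estimates on $H$. Near the origin, $H(z)\le \rmE[e^{\kappa^2\beta_1(z)}]$, and the finiteness of $\int_{B_1}\int_{B_1}\rmE[e^{\kappa^2\beta_1(x-y)}]\,\dd x\,\dd y$ from \cite{mueller2004singular} (valid precisely because $0<\kappa<\tfrac{d-2}{2}$) together with the uniform positivity of $V_1$ on $\{\|z\|\le 1\}$ gives $\int_{\|z\|\le 1}H(z)\,\dd z<\infty$, a constant independent of $R$. For intuition, the exponent governing this singularity is $\rmE[e^{\kappa^2\beta_1(z)}]\asymp\|z\|^{-\alpha}$ with $\alpha=\tfrac{d-2}{2}-[(\tfrac{d-2}{2})^2-\kappa^2]^{1/2}<d$, the smaller indicial root of the Hardy-type operator $\Delta+\kappa^2\|z\|^{-2}$ underlying the Feynman--Kac functional $\beta_1$; this is exactly where subcriticality $\kappa<\tfrac{d-2}{2}$ enters. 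Far from the origin I would show $H(z)\le C\|z\|^{-4}$ for $\|z\|\ge 1$: Taylor expanding the exponential, the leading contribution is the second-chaos term $\tfrac{\kappa^4}{2}\rmE[\beta_1(z)^2]$, and since for large $\|z\|$ the Brownian increments $\sqrt{2}W_s$, $s\in[0,1]$, are negligible against $z$, one has $\beta_1(z)\approx\|z\|^{-2}$ and $\rmE[\beta_1(z)^2]\le C\|z\|^{-4}$, the remaining terms of the series being of the same or smaller order.

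Combining the two regimes,
\begin{equation*}
\int_{\|z\|\le 2R}H(z)\,\dd z\ \le\ C+C\int_1^{2R} r^{d-5}\,\dd r\ =\ O\!\big(1+R^{d-4}\big)
\end{equation*}
(with an extra $\log R$ factor when $d=4$, and $O(1)$ when $d=3$), so that
\begin{equation*}
\frac{1}{R^{2d-2}}\sum_{n=2}^\infty \bbVar\big[I_1^{(n)}(\1_{B_R})\big]\ \le\ \frac{\omega_d}{R^{d-2}}\int_{\|z\|\le 2R}H(z)\,\dd z\ =\ O\!\big(R^{-2}+R^{-(d-2)}\log R\big)\ \longrightarrow\ 0.
\end{equation*}
The decisive point, and the main obstacle, is the far-field estimate: we only need $H(z)$ to decay strictly faster than $\|z\|^{-2}$, and this is exactly what subtracting the $n=0$ and $n=1$ chaos buys us, since the first chaos alone produces the slowly decaying tail $\kappa^2\rmE[\beta_1(z)]\asymp\|z\|^{-2}$, whose integral against $V_R$ is of the critical order $R^{2d-2}$ --- precisely the order captured by Proposition~\ref{prop:firstchaos}. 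Passing from the termwise estimate of $\rmE[\beta_1(z)^2]$ to a bound on the full tail $\sum_{n\ge 2}\tfrac{\kappa^{2n}}{n!}\rmE[\beta_1(z)^n]$ that is uniform over $1\le\|z\|\le 2R$ is the one step that requires genuine care.
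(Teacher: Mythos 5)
Your route is genuinely different from the paper's: the paper works entirely on the Fourier side, expressing each $\bbVar[I_1^{(n)}(\1_{B_R})]$ via Lemma~\ref{le:nthchaos}, splitting according to whether $\|\eta_1\|\le 1$, and contracting the chain of Riesz kernels with Lemma~\ref{le:rieszcomp} with exponents tuned so that the resulting series is summable exactly when $0<\kappa<\frac{d-2}{2}$. Your physical-space reduction is attractive and most of it is sound: summing \eqref{eq:chaosvar} over $n\ge 2$ to get $\sum_{n\ge2}\bbVar[I_1^{(n)}(\1_{B_R})]=\int_{B_R}\int_{B_R}H(x-y)\,\dd x\,\dd y$ with $H(z)=\rmE[e^{\kappa^2\beta_1(z)}]-1-\kappa^2\rmE[\beta_1(z)]$ is legitimate (Tonelli, all terms nonnegative), the self-intersection-volume bound is correct, the near-field estimate $\int_{\|z\|\le1}H(z)\,\dd z<\infty$ does follow from the finiteness of $\int_{B_1}\int_{B_1}\rmE[e^{\kappa^2\beta_1(x-y)}]\,\dd x\,\dd y$ in \cite{mueller2004singular} (and this is where $\kappa<\frac{d-2}{2}$ enters, matching the sharpness of the paper's hypothesis), and your final bookkeeping $O(R^{-2}+R^{-(d-2)}\log R)$ is right in all dimensions $d\ge3$.

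However, the far-field bound $H(z)\le C\|z\|^{-4}$ for $\|z\|\ge1$ --- which you correctly identify as the decisive step --- is left as a heuristic, and it is precisely where criticality bites, so it cannot be waved through. The obvious attempts fail: writing $e^x-1-x\le\frac{x^2}{2}e^x$ and applying H\"older would require a moment $\rmE[e^{p\kappa^2\beta_1(z)}]$ for some $p>1$, but when $\kappa$ is close to $\frac{d-2}{2}$ one has $p\kappa^2>(\frac{d-2}{2})^2$ and this moment is \emph{infinite for every} $z$ (the Hardy-potential dichotomy --- the same reason the chaos series diverges for $\kappa>\frac{d-2}{2}$); and termwise bounds $\rmE[\beta_1(z)^n]\le C_n\|z\|^{-4}$ are useless without control of $C_n$ in $n$ sharp enough to sum $\kappa^{2n}C_n/n!$, which is the whole difficulty. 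The gap is fillable with a localization that you should make explicit: let $\tau$ be the exit time of $W$ from the ball of radius $\|z\|/(2\sqrt2)$. On $\{\tau\ge1\}$ one has $\|z+\sqrt2 W_s\|\ge\|z\|/2$ for all $s\le1$, hence $\beta_1(z)\le4\|z\|^{-2}\le4$ and $e^{\kappa^2\beta_1}-1-\kappa^2\beta_1\le\frac{\kappa^4}{2}e^{4\kappa^2}\beta_1^2\le C\|z\|^{-4}$. On $\{\tau<1\}$, bound the contribution of $[0,\tau]$ by $e^{4\kappa^2}$, apply the strong Markov property at $\tau$, and use Mueller--Tribe's uniform estimate on $\rmE[e^{\kappa^2\beta_t(z')}]$ (the bound behind \eqref{eq:l2bound}, valid for all $\kappa<\frac{d-2}{2}$) with the restarted singularity at distance $\|z'\|\ge\|z\|/2$, to get $\rmE\bigl[e^{\kappa^2\beta_1(z)}\1_{\{\tau<1\}}\bigr]\le C\,\Prob(\tau<1)\le Ce^{-c\|z\|^2}$. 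With these two inequalities your proof closes and gives an alternative to the paper's Fourier argument; as written, though, the proposal has a hole at its crucial point.
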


\begin{remark}\label{rm:comapreWNZ}
Before proving these two propositions, we point out how our results are similar or not similar to  \cite[Theorem 1.7]{nualartZheng}. In \cite[Theorem 1.7]{nualartZheng}, the authors proved a central limit theorem for the spatial averages of a stochastic heat equation with a spatial covariance function given by the Riesz kernel $\|\cdot\|^{-{p}}$ {with index $0<{p}<(2 \wedge d)$ for $d\geq 1$}, using the same method as checking the order of the variance of the first order chaos and then the sum of all higher order chaos. 

However, although Proposition~\ref{prop:firstchaos} and Lemma~\ref{le:nthchaos} below are indeed similar to the analogous results in the proof of \cite[Theorem 1.7]{nualartZheng}, our proof of Proposition~\ref{prop:higher} is not an adaption of \cite{nualartZheng}.

From the viewpoint of \eqref{eq:microshe} being a critical SPDE,  we only expect to prove Proposition~\ref{prop:higher} under the restriction  $0<\kappa<\frac{d-2}{2}$. This is a strong indication that \cite{nualartZheng} cannot be directly adapted to prove Proposition~\ref{prop:higher} here. In fact, if we consider the counterpart of \eqref{eq:microshe} {(still in $d\geq 3$)} with Riesz index $0<p<2$, the central limit theorem from  \cite{nualartZheng} should hold for  all $\kappa>0$. However, in our case ($p=2$), for any fixed-radius ball $B_R$ with $R>0$, the chaos expansion $\sum_{n=0}^{\infty}I_1^{(n)}(\1_{B_R}) $ is not bounded in $L^2(\Omega)$ when $\kappa>\frac{d-2}{2}$ (see \cite[page 114]{mueller2004singular}). This shows that it would be unreasonable to expect a central limit theorem for all $\kappa>0$, and it further suggests that the estimates from \cite{nualartZheng} would not work   here.

If one dives into the technical detail, when Dalang's condition holds, one can always choose a sufficiently large $N$ so that the integral 
$
\int_{|\xi|\geq N} \frac{1}{\|\xi\|^2}\mu(\dd \xi)
$ becomes arbitrarily small. Here $\mu$ is the spectral measure of the spatial covariance function as discussed in Section~\ref{se:fourier}. The estimates from 
\cite{nualartZheng} rely on this fact, and it is also the reason why the central limit theorem would hold for  any coupling constant $\kappa>0$ before the noise when the Riesz kernel is of index $0<p<2$. 
When the index of the Riesz kernel becomes $p=2$, Dalang's condition no longer holds, and the above integral of the tail is not finite for any arbitrary $N$. The estimates from \cite{nualartZheng} thus become inapplicable, and the estimates we use to prove Proposition~\ref{prop:higher} are new.
\end{remark}

\subsubsection{Notations and auxiliary lemmas}
We first give some notations. For any $n \geq 1$, $1\leq k \leq n-1$, we use $\bm{\eta}_n=(\eta_1,
\dots, \eta_n) \in \bbR^{nd}$ and $\bm{\eta_{k,n}}=(\eta_{k+1},
\dots, \eta_n) \in \R^{(n-k)d}$ to denote the $n$-tuple and $(n-k)$-tuple spatial variables respectively. We also use ${\bm{w}_n}=(w_1,\dots,w_n)\in  \bbR^{n}_{> 0}$ for the $n$-tuple time variables.
Since we only need to consider for $t_R=1$, we set
\begin{align}
\Delta_n&:=\{\bm{s}_n \in  \bbR^{n}_{> 0}: 0<s_n<\dots <s_1<1\}, \label{eq:delta}\\ S_n&:=\{\bm{w}_n\in  \bbR^{n}_{> 0}: w_1+\dots+w_n< 1\}, \label{eq:triangle}
\end{align}
and define \begin{equation}\label{phi}
\varphi(\bm{\eta}_{n}):=\int_{S_{n}} e^{-\sum_{i=1}^n{w_i \|\eta_i\|^2}}\dd{\bm{w}}.
\end{equation}
Now for any $n\geq 1$, we  give  the formula of the variance of $I_1^{(n)}(\1_{B_R})$ on the Fourier side. This computation is similar to the one in \cite[Theorem 1.7]{nualartZheng} when an SHE with a spatial covariance function given by the Riesz kernel $\|\cdot\|^{-p}$ {for $d\geq 1$ with index $0<p<(2\wedge d)$} is considered. For completeness, we give the proof.
\begin{lemma}\label{le:nthchaos}
For any $n\geq 1$, $R>0$, setting  $\eta_0=0$, we have
\begin{equation*}
   \bbVar[I_1^{(n)} (\1_{B_R})]=\kappa^{2n}(2\pi)^{d} R^{2d-2n}
     c_{d}^n \int_{\bbR^{nd}} \prod_{j=1}^n \|\eta_j-\eta_{j-1}\|^{-d+2} \|\eta_n\|^{-d}J_{d/2}(\|\eta_n\|)^2\varphi(\bm{\eta}_n/R)  \dd \bm{\eta}_n,\end{equation*}
     where $c_d$ is defined as in \eqref{eq:spectralmeasure}, and  the functions $\varphi$ and $J_{d/2}$ are  defined as in \eqref{phi} and Lemma~\ref{lem:el1}.
\end{lemma}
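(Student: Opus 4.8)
The plan is to begin from the heat-kernel form of the second moment. Since $I^{(n)}_1(\1_{B_R})$ for $n\geq 1$ is a genuine $n$-th chaos it has mean zero, so $\bbVar[I^{(n)}_1(\1_{B_R})]=\bbE[I^{(n)}_1(\1_{B_R})^2]$, and I may take the first line of \eqref{eq:chaosvar} (specialized to $t=1$, with $s_{n+1}=1$, $y_{n+1}=x_1$, $z_{n+1}=x_2$) as the starting point. The device for moving to the Fourier side is the spectral representation \eqref{eq:spectralmeasure} of each Riesz factor,
\[
\frac{1}{\|y_i-z_i\|^2}=c_d\int_{\bbR^d}e^{-\iu(y_i-z_i)\cdot\xi_i}\|\xi_i\|^{-d+2}\dd\xi_i,
\]
which introduces one frequency variable $\xi_i$ per Riesz kernel and splits the $y$- and $z$-integrations into a product of phase factors.

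Next I would integrate out the spatial variables $y_1,\dots,y_n$ and $z_1,\dots,z_n$ along the two convolution chains, using the Gaussian identity $\int_{\bbR^d}G_u(w)e^{\pm\iu w\cdot\xi}\dd w=e^{-u\|\xi\|^2/2}$. This telescopes: integrating $y_1,\dots,y_k$ in order carries a phase $e^{-\iu y_{k+1}\cdot(\xi_1+\cdots+\xi_k)}$ forward and accumulates the factor $\prod_{j=1}^k e^{-(s_{j+1}-s_j)\|\xi_1+\cdots+\xi_j\|^2/2}$, with the conjugate computation for the $z$-chain. After the final step the two chains leave behind $e^{-\iu(x_1-x_2)\cdot(\xi_1+\cdots+\xi_n)}$ together with $\prod_{j=1}^n e^{-(s_{j+1}-s_j)\|\xi_1+\cdots+\xi_j\|^2}$. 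Integrating over $x_1,x_2\in B_R$ produces $\bigl|\mathscr{F}\1_{B_R}(\xi_1+\cdots+\xi_n)\bigr|^2$, and Lemma~\ref{lem:el1} rewrites this as $(2\pi R)^d\|\xi_1+\cdots+\xi_n\|^{-d}J_{d/2}(R\|\xi_1+\cdots+\xi_n\|)^2$.

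It then remains to recognize the result through two unit-Jacobian substitutions. Introducing the time gaps $w_j:=s_{j+1}-s_j$ maps the ordered time simplex bijectively onto $S_n$ from \eqref{eq:triangle}, turning the time integral into the function $\varphi$ from \eqref{phi} evaluated at the partial sums. Substituting the partial sums $\eta_j:=\xi_1+\cdots+\xi_j$ (so $\xi_j=\eta_j-\eta_{j-1}$ with $\eta_0=0$) then converts the spectral weights into $\prod_{j=1}^n\|\eta_j-\eta_{j-1}\|^{-d+2}$, the boundary term into $\|\eta_n\|^{-d}J_{d/2}(R\|\eta_n\|)^2$, and the time integral into exactly $\varphi(\bm{\eta}_n)$. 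A final rescaling of each $\eta_j$ by $R^{-1}$ collects the powers of $R$ — namely $R^d$ from $(2\pi R)^d$, $R^{-nd}$ from the Jacobian, $R^{n(d-2)}$ from the $n$ Riesz weights, and $R^d$ from $\|\eta_n\|^{-d}$ — into the stated prefactor $R^{2d-2n}$, while replacing $J_{d/2}(R\|\eta_n\|)$ by $J_{d/2}(\|\eta_n\|)$ and $\varphi(\bm{\eta}_n)$ by $\varphi(\bm{\eta}_n/R)$, matching the claimed identity.

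The main obstacle is rigor in the order of integration. Once the modulus-one phases are dropped, the spectral weights $\prod_i\|\xi_i\|^{-d+2}$ are not integrable over $\bbR^{nd}$, so Fubini cannot be applied naively to the spectral representation. The clean way around this is to carry out the Gaussian spatial integrations \emph{before} the frequency integrations — legitimate because the integrand in \eqref{eq:chaosvar} is absolutely integrable, the second moment being finite by \cite{mueller2004singular} — so that the Gaussian factors $\prod_j e^{-w_j\|\eta_j\|^2}$ furnish the decay needed in the frequency variables; alternatively one truncates or mollifies the Riesz kernel, performs the finite computation, and passes to the limit by monotone convergence. In either case the convergence of the final $\bm{\eta}_n$-integral is guaranteed by the bound \eqref{eq:finiteJint} on the Bessel profile, confirming that the expression is finite.
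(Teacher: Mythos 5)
Your proof is correct and takes essentially the same route as the paper's: both pass to the Fourier side via the spectral measure \eqref{eq:spectralmeasure}, reduce the Gaussian part to the factor $\prod_{j} e^{-(s_{j+1}-s_j)\|\xi_1+\cdots+\xi_j\|^2}$, change variables to the partial sums $\eta_j$ and time gaps $w_j$, apply Lemma~\ref{lem:el1} to the $x_1,x_2$ integrals, and finish with the rescaling $\eta_j \to \eta_j/R$, whose bookkeeping of powers of $R$ you carry out correctly. The only cosmetic difference is that you start from the It\^o-isometry (heat-kernel) line of \eqref{eq:chaosvar} and telescope the kernel chains directly, whereas the paper starts from the Feynman--Kac line and evaluates the Brownian characteristic function $\rmE[e^{-\iu \sum_j \xi_j \cdot \sqrt{2}W_{s_j}}]$ --- the same Gaussian computation in different clothing; your explicit attention to justifying the interchange of integrals (which the paper passes over silently) is a welcome extra.
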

\begin{proof}
{Recall that in \eqref{eq:betat}, we defined $\beta_1(x)=\int_0^1 \frac{\dd s}{\left\|x+\sqrt{2}W_{s}\right\|^{2}}$ for any $x \in \R^d$.}
By \eqref{eq:chaosvar}, for any $n\geq 1$,
\begin{equation}\label{eq:varforn}
\bbVar[I_1^{(n)} (\1_{B_R})]=
\bbE\left[I^{(n)}_1(\1_{B_R})^2\right] = \frac{\kappa^{2n}}{n!}\int_{B_R}\int_{B_R}\rmE[  \beta_1(x-y)^n]\dd x\dd y.
\end{equation}
By the Fourier transform of Gaussian random variables and \eqref{eq:spectralmeasure}, we have
\begin{align*}
    \rmE[\beta_1(x-y)^n]&= c_{d}^n \int_{[0,1]^n} \int_{\bbR^{nd}} \prod_{j=1}^n \|\xi_j\|^{-d+2} e^{-\iu (x-y)\cdot (\xi_1+\cdots+ \xi_n)} \rmE[e^{-\iu \sum_{j=1}^n\xi_j\cdot \sqrt{2}W_{s_j}}] \dd \bm{\xi}_n \dd\bm{s}_n  \\&=
    c_{d}^n n! \int_{\Delta_n} \int_{\bbR^{nd}} \prod_{j=1}^n \|\xi_j\|^{-d+2} e^{-\iu (x-y)\cdot (\xi_1+\cdots+ \xi_n)} e^{-\sum_{j=1}^n(s_j-s_{j+1}) \|\xi_1+\dots +\xi_j\|^2  } \dd \bm{\xi}_n \dd\bm{s}_n,
\end{align*}
where we set $s_{n+1}=0$,  $c_d$ is  as defined in \eqref{eq:spectralmeasure},  and $\Delta_n$ is as defined in \eqref{eq:delta}. Now making the change of variables $\eta_j:=\xi_1+\dots+\xi_j$ and $w_j:=s_j-s_{j+1}$, we can rewrite the above   as 
\begin{align*}
   &
    c_{d}^n n! \int_{S_n} \int_{\bbR^{nd}} \prod_{j=1}^n \|\eta_j-\eta_{j-1}\|^{-d+2} e^{-\iu (x-y)\cdot \eta_n} e^{-\sum_{j=1}^nw_j \|\eta_j\|^2  } \dd \bm{\eta}_n \dd\bm{w}_n \\
    &=c_{d}^n n!  \int_{\bbR^{nd}} \prod_{j=1}^n \|\eta_j-\eta_{j-1}\|^{-d+2} e^{-\iu (x-y)\cdot \eta_n} \varphi(\bm{\eta}_n) \dd \bm{\eta}_n,
\end{align*}
where $S_n$  is as defined in \eqref{eq:triangle} and $\varphi$ is as defined in \eqref{phi}.
Now plugging back into \eqref{eq:varforn}, we have \begin{align*}
    \bbVar[I_1^{(n)} (\1_{B_R})] &= \kappa^{2n}c_{d}^n  \int_{\bbR^{nd}} \prod_{j=1}^n \|\eta_j-\eta_{j-1}\|^{-d+2} \left(\int_{B_R}\int_{B_R}e^{-\iu (x-y)\cdot \eta_n}\dd x\dd y\right)\varphi(\bm{\eta}_n) \dd \bm{\eta}_n \\&=\kappa^{2n}(2\pi R)^{d}c_{d}^n\int_{\bbR^{nd}} \prod_{j=1}^n \|\eta_j-\eta_{j-1}\|^{-d+2} \|\eta_n\|^{-d}J_{d/2}(R\|\eta_n\|)^2 \varphi(\bm{\eta}_n) \dd \bm{\eta}_n,
\end{align*}
where  the last line  follows from Lemma~\ref{lem:el1}. The proof is  now complete by applying another change-of-variable $\eta_j \to \eta_j/R$ for all $j=1,2,\dots,n$.
\end{proof}

Another useful lemma is the bounds of $\varphi(\bm{\eta}_{n})$:
\begin{lemma}\label{lem:phibound}
    For any $n\geq 1$ and $\bm{\eta}_n \in \R^{nd}$,
\begin{align}\label{phibound1}
\varphi(\bm{\eta}_n)\leq \frac{1}{n!}.
  \end{align}
  Thus for any $n\geq 2$ and $\bm{\eta}_n \in \R^{nd}$,
\begin{align}\label{eq:limitzero}
 \lim_{R\to \infty}   \frac{1}{R^{2n-2}}\varphi(\bm{\eta}_{n}/R) =0.
    \end{align}
  Moreover, for any $n\geq 2$ and $1\leq k\leq n-1$, when $\eta_{i}\neq 0$ for $1\leq i \leq k$, we have
\begin{align}\label{phibound2}
\varphi(\bm{\eta}_{n}) \leq \left(1\wedge \frac{1}{\|\eta_1\|^2\dots \|\eta_k\|^2} \right)   \varphi(\bm{\eta}_{k,n}).
    \end{align}
\end{lemma}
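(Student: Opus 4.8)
The plan is to treat the three claims in increasing order of difficulty, since the first immediately implies the second and also supplies one of the two bounds needed for the third. For \eqref{phibound1} I would simply observe that the integrand $e^{-\sum_{i=1}^n w_i\|\eta_i\|^2}$ is bounded above by $1$ on the whole domain, so that $\varphi(\bm{\eta}_n)\le \mathrm{Vol}(S_n)$. Since $S_n$ is the standard open simplex $\{\bm{w}_n\in\bbR^n_{>0}:\ w_1+\cdots+w_n<1\}$, its volume is exactly $1/n!$, which gives the claim with no case analysis.

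For \eqref{eq:limitzero} I would feed \eqref{phibound1} the rescaled argument: by definition $0\le \varphi(\bm{\eta}_n/R)\le 1/n!$ uniformly in $R$, hence $R^{-(2n-2)}\varphi(\bm{\eta}_n/R)\le (n!\,R^{2n-2})^{-1}$. For $n\ge 2$ the exponent $2n-2\ge 2$ is strictly positive, so the right-hand side tends to $0$ as $R\to\infty$, which is all that is needed; no finer asymptotics of $\varphi$ are required here.

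For the factorization bound \eqref{phibound2}, which is the substantive part, I would split the time variables into the first $k$ coordinates $(w_1,\dots,w_k)$ and the remaining $(w_{k+1},\dots,w_n)$ and integrate iteratively. Fixing $(w_{k+1},\dots,w_n)$ with $\sum_{j=k+1}^n w_j<1$ and setting $a:=1-\sum_{j=k+1}^n w_j\in(0,1)$, the inner domain for $(w_1,\dots,w_k)$ is the scaled open simplex $\{w_i>0:\ \sum_{i=1}^k w_i<a\}$, and the inner integral is $\int \prod_{i=1}^k e^{-w_i\|\eta_i\|^2}\,\dd w_1\cdots\dd w_k$. I would bound this inner integral in two ways: first, enlarging the domain to $(0,\infty)^k$ and using $\int_0^\infty e^{-w\|\eta_i\|^2}\,\dd w=\|\eta_i\|^{-2}$ (valid since $\eta_i\neq 0$ for $1\le i\le k$) gives the bound $\prod_{i=1}^k\|\eta_i\|^{-2}$; second, bounding the integrand by $1$ gives the volume $a^k/k!\le 1$ since $a<1$. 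Taking the minimum yields the bound $1\wedge\prod_{i=1}^k\|\eta_i\|^{-2}$, which is uniform in $(w_{k+1},\dots,w_n)$ and can therefore be pulled outside the remaining integral. The outer domain is exactly $S_{n-k}$ in the variables $(w_{k+1},\dots,w_n)$, so what is left is precisely $\varphi(\bm{\eta}_{k,n})$, completing \eqref{phibound2}.

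The only real care required is in the third part: checking that, after fixing the last $n-k$ times, the inner domain really is the simplex of ``radius'' $a$ (so that both the volume bound and the extension to $(0,\infty)^k$ are legitimate), and that the leftover outer integral reduces cleanly to $\varphi(\bm{\eta}_{k,n})$ over $S_{n-k}$. Because the pulled-out factor does not depend on $(w_{k+1},\dots,w_n)$, no further estimate on the outer integral is needed, and I expect no genuine obstacle beyond the bookkeeping of the integration regions.
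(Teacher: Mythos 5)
Your proof is correct and takes essentially the same approach as the paper: the first two claims follow from bounding the integrand by $1$ and using the simplex volume $1/n!$, and the third rests on bounding the integral over the first $k$ time variables both by the unconstrained exponential integrals $\prod_{i=1}^k\|\eta_i\|^{-2}$ and by the simplex volume, then pulling the minimum out of the integral over the remaining variables, which is exactly $\varphi(\bm{\eta}_{k,n})$. The only difference is cosmetic: the paper integrates out $w_1$ explicitly and peels off one variable at a time by induction (yielding the slightly sharper bound $\prod_{i=1}^k\bigl(1\wedge\|\eta_i\|^{-2}\bigr)$), whereas you treat all $k$ variables in one Fubini block and obtain the stated bound \eqref{phibound2} directly.
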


\begin{proof}
The proof of \eqref{phibound1} and \eqref{eq:limitzero} is trivial. For \eqref{phibound2},  we have that for any $n\geq 2$,
\begin{align*}
 \varphi(\bm{\eta}_n) &=    \int_{S_n} e^{-w_1 \|\eta_1\|^2} e^{- \sum_{i=2}^n w_i \|\eta_i\|^2}  \dd \bm{w}_{n} \\
 &= \int_{\R_{>0}^{n-1}}\1_{(\sum_{i=2}^nw_i)<1}  \frac{1}{\|\eta_1\|^2} \left(
 1-e^{-\left(1-\sum_{i=2}^nw_i\right)\|\eta_1\|^2}\right)e^{- \sum_{i=2}^n w_i \|\eta_i\|^2} \dd w_2\dots \dd w_n \\
 &\leq \left(1 \wedge\frac{1}{\|\eta_1\|^2}\right) \int_{S_{n-1}}  e^{- \sum_{i=2}^n w_{i-1} \|\eta_i\|^2} \dd \bm{w}_{n-1}=\left(1 \wedge\frac{1}{\|\eta_1\|^2}\right)\varphi(\bm{\eta}_{1,n}).
\end{align*}
{Here in the second line, we integrated $w_1$ from $0$ to $1-\sum_{i=2}^n w_i$. To check the upper bound, we note that using $1-e^{-x} \leq x$,  \[\1_{(\sum_{i=2}^nw_i)<1} \frac{1}{\|\eta_1\|^2} \left(
 1-e^{-\left(1-\sum_{i=2}^nw_i\right)\|\eta_1\|^2}\right) \leq \1_{(\sum_{i=2}^nw_i)<1} \frac{1}{\|\eta_1\|^2}(1-\sum_{i=2}^nw_i)\|\eta_1\|^2 \leq 1. \] }
We can then conclude {\eqref{phibound2}} by induction. 
\end{proof}

We are now ready to prove Proposition~\ref{prop:firstchaos} and Proposition~\ref{prop:higher}.
\subsubsection{Proof of Proposition~\ref{prop:firstchaos}}
For any $R>1$, $\eta_1\in\R^d$, it is easy to see that  for any $\eta_1 \neq 0$, \[
\varphi({\eta_1}/R)=\int_{0}^1 e^{-{w_1 \|\eta_1/R\|^2}}\dd {w_1}=\frac{1-e^{-\|\eta_1/R\|^2}}{\|\eta_1/R\|^2} \leq 1,
\]
and   $\lim_{R\to \infty} \varphi({\eta_1}/R) =1$. Applying  the dominated convergence theorem with the bound \eqref{eq:finiteJint}, we have
\[
\lim_{R\to \infty}\int_{\bbR^d}   \|\eta_1\|^{-2d+2}J_{d/2}(\|\eta_1\|)^2 \varphi(\eta_1/R)\dd\eta_1 = \int_{\bbR^d}   \|\eta\|^{-2d+2}J_{d/2}(\|\eta\|)^2\dd\eta.
\] Using 
Lemma~\ref{le:nthchaos} with $n=1$, it remains to verify that the variance $\sigma^2$ in \eqref{eq:1stvar} equals to
\[
\kappa^2 (2\pi)^d c_d \int_{\R^d} \|\eta\|^{-2d+2}J_{d/2}(\|\eta\|)^2\dd\eta.
\]

{In fact, by purely computation of Fourier transforms, we have
\[ 
(2\pi)^d c_d \int_{\R^d} \|\eta\|^{-2d+2}J_{d/2}(\|\eta\|)^2\dd\eta = \int_{B_1}\int_{B_1} \frac{1}{\|x-y\|^2}\dd x \dd y .
\]
To further evaluate the integral, we note that
\[
\begin{aligned}
\int_{B_1}\int_{B_1} \frac{1}{\|x-y\|^2}\dd x \dd y &= \int_{\R^d}\int_{\R^d} \1_{B_1}(x) \1_{B_1}(x-z) \frac{1}{\|z\|^2}\dd x \dd z = \int_{\R^d} V(z) \frac{1}{\|z\|^2}  \dd z,
\end{aligned}
\]
where $V(z) = \int_{\R^d} \1_{B_1}(x) \1_{B_1}(x-z)  \dd x = |B_1(0)\cap B_1(z)|$ is the volume of overlap of two unit balls centered at 0 and $z$ respectively. By symmetry, $V(z)$ is a function of the distance $\|z\|$.}

{
Let $r:= \|z\|$. Also, let $\omega_{d-1} = \frac{2 \pi^{d/ 2}}{\Gamma(\frac{d}{2})} $ and $\nu_{d-1}=\frac{\pi^{(d-1)/ 2}}{\Gamma(\frac{d+1}{2})}$ be  the surface area and  volume of $(d-1)$-dimension unit sphere respectively. Then we can compute and integrate the hyperspherical caps as
\[
\begin{aligned}
 &\int_{\R^d} V(z) \frac{1}{\|z\|^2}  \dd z = \int_0^2 \left( \int_{\frac{r}{2}}^1 2\nu_{d-1} (1-t^2)^{\frac{d-1}{2}} \dd t \right)\frac{1}{r^2}\omega_{d-1}  r^{d-1}\dd r\\
 &= 2\nu_{d-1} \omega_{d-1} \int_0^1 \left(\int_0^{2t} r^{d-3} \dd r \right) (1-t^2)^{\frac{d-1}{2}} \dd t =  \frac{2\nu_{d-1} \omega_{d-1}}{d-2} \int_0^1 (2t)^{d-2}(1-t^2)^{\frac{d-1}{2}} \dd t \\
 &= \frac{\nu_{d-1} \omega_{d-1} 2^{d-2}}{d-2} \int_0^1 u^{\frac{d-3}{2}}(1-u)^{\frac{d-1}{2}} \dd u = \frac{\nu_{d-1} \omega_{d-1} 2^{d-2}}{d-2}\frac{\Gamma(\frac{d-1}{2})\Gamma(\frac{d+1}{2})}{\Gamma(d)} = \frac{ 2^{\,d-1}\,\pi^{\,d-\frac12}\,\Gamma(\frac{d-1}{2})}
{(d-2)\,\Gamma(\frac d2)\,\Gamma(d)},
\end{aligned} 
\] where  we make a change of variable $u=t^2$ and use the relation between beta function and gamma function in the last line.
}

\subsubsection{Proof of Proposition~\ref{prop:higher}}
By Lemma~\ref{le:nthchaos}, for any $n\geq 2$, $R>1$, setting $\eta_0=0$, we have
\begin{align*}
        & \frac{1}{R^{2d-2}} {\bbVar\left[I_1^{(n)}(\1_{B_R})\right]}\\&=\kappa^{2n}
 (2\pi)^dR^{2-2n}  c_d^n \int_{\bbR^{nd}} \prod_{j=1}^n \|\eta_j-\eta_{j-1}\|^{-d+2} \|\eta_n\|^{-d}J_{d/2}(\|\eta_n\|)^2\varphi(\bm{\eta}_n/R)  \dd \bm{\eta}_n
        \\&= \kappa^{2n} (2\pi)^d c_d^n[\mathcal{J}^{(n)}(R)+\mathcal{K}^{(n)}(R)],
    \end{align*}
where we define the functions
\[
\mathcal{J}^{(n)}(R):=R^{2-2n} \int_{\R^{nd}} \1_{B_1^c}(\eta_1)  \prod_{j=1}^n \|\eta_j-\eta_{j-1}\|^{-d+2} \|\eta_n\|^{-d}J_{d/2}(\|\eta_n\|)^2\varphi(\bm{\eta}_n/R)  \dd \bm{\eta}_n,
\]
and
\[
\mathcal{K}^{(n)}(R):=R^{2-2n} \int_{\R^{nd}} \1_{B_1}(\eta_1)  \prod_{j=1}^n \|\eta_j-\eta_{j-1}\|^{-d+2} \|\eta_n\|^{-d}J_{d/2}(\|\eta_n\|)^2\varphi(\bm{\eta}_n/R)  \dd \bm{\eta}_n.
\]

The proof then follows from the following two lemmas.
\begin{lemma}\label{le:B1part}
     For  any $0<\kappa<\frac{d-2}{2}$, we have
     \[\lim_{R\to\infty}
     \sum_{n=2}^{\infty}
     \kappa^{2n} c_d^n\mathcal{J}^{(n)}(R)=0.
     \]
\end{lemma}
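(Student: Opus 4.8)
The plan is to establish the result by dominated convergence over the summation index $n$. Concretely, I will show two things: first, that for each fixed $n\ge 2$ one has $\lim_{R\to\infty}\mathcal{J}^{(n)}(R)=0$; and second, that there is a summable sequence $(a_n)$, independent of $R$, with $\kappa^{2n}c_d^n\mathcal{J}^{(n)}(R)\le a_n$ for all $R>1$ and $\sum_{n\ge2}a_n<\infty$. Granting these, interchanging the limit and the sum gives $\lim_{R\to\infty}\sum_{n\ge2}\kappa^{2n}c_d^n\mathcal{J}^{(n)}(R)=\sum_{n\ge2}\kappa^{2n}c_d^n\lim_{R\to\infty}\mathcal{J}^{(n)}(R)=0$, which is the claim.

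The first step removes the $R$-dependence of the prefactor $R^{2-2n}$. Applying the bound \eqref{phibound2} with $k=n-1$ to the rescaled argument $\bm{\eta}_n/R$ yields $\varphi(\bm{\eta}_n/R)\le\big(R^{2(n-1)}\prod_{i=1}^{n-1}\|\eta_i\|^{-2}\big)\varphi(\eta_n/R)$, so that (also using $\varphi(\eta_n/R)\le1$) the factor $R^{2-2n}\varphi(\bm{\eta}_n/R)$ inside $\mathcal{J}^{(n)}(R)$ is bounded by the $R$-free weight $\prod_{i=1}^{n-1}\|\eta_i\|^{-2}$. Setting $\eta_0=0$ and
\[
A_n:=\int_{\R^{nd}}\1_{B_1^c}(\eta_1)\prod_{j=1}^n\|\eta_j-\eta_{j-1}\|^{-d+2}\Big(\prod_{i=1}^{n-1}\|\eta_i\|^{-2}\Big)\|\eta_n\|^{-d}J_{d/2}(\|\eta_n\|)^2\,\dd\bm{\eta}_n,
\]
this gives $\mathcal{J}^{(n)}(R)\le A_n$ for all $R>1$. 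For the pointwise-in-$n$ limit, note that the integrand of $\mathcal{J}^{(n)}(R)$ is the fixed (and $A_n$-integrable) function $\1_{B_1^c}(\eta_1)\prod_j\|\eta_j-\eta_{j-1}\|^{-d+2}\|\eta_n\|^{-d}J_{d/2}(\|\eta_n\|)^2$ times $R^{2-2n}\varphi(\bm{\eta}_n/R)$; since $R^{2-2n}\varphi(\bm{\eta}_n/R)\to0$ pointwise by \eqref{eq:limitzero} and is dominated by $\prod_{i=1}^{n-1}\|\eta_i\|^{-2}$, the dominated convergence theorem applied inside the integral gives $\mathcal{J}^{(n)}(R)\to0$. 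Thus both required ingredients reduce to a single estimate: $\sum_{n\ge2}\kappa^{2n}c_d^nA_n<\infty$.

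The heart of the proof is this summability of $A_n$, and it is exactly here that the hypothesis $\kappa<\frac{d-2}{2}$ is used. I will integrate the variables $\eta_1,\dots,\eta_{n-1}$ one at a time, dominating throughout by the scale-invariant profile $\Phi(\eta):=\|\eta\|^{-(d-2)/2}$, which is the eigenfunction of the one-step operator $Kf(\eta'):=\int_{\R^d}f(\eta)\|\eta\|^{-2}\|\eta'-\eta\|^{-d+2}\,\dd\eta$: indeed Lemma~\ref{le:rieszcomp} with $\alpha=\tfrac{d-2}{2}$ and $\beta=2$ gives $K\Phi=\lambda\Phi$ with $\lambda=k_{(d-2)/2,2,d}$, the composition being admissible since $\alpha+\beta=\tfrac{d+2}{2}<d$ for $d\ge3$. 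Defining $h_1(\eta):=\int_{\|\eta_1\|\ge1}\|\eta_1\|^{-d}\|\eta-\eta_1\|^{-d+2}\,\dd\eta_1$ and $h_{j+1}:=Kh_j$, one has $A_n\le\int_{\R^d}h_{n-1}(\eta)\|\eta\|^{-d}J_{d/2}(\|\eta\|)^2\,\dd\eta$. The cutoff $\|\eta_1\|\ge1$ makes $h_1$ finite and forces $h_1(\eta)\le C\,\Phi(\eta)$ (for small $\|\eta\|$ the left side is bounded while $\Phi$ blows up, and for large $\|\eta\|$ the mild logarithmic correction of $h_1$ is absorbed by the slower decay of $\Phi$). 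Propagating this through $K\Phi=\lambda\Phi$ yields $h_j\le C\lambda^{j-1}\Phi$, whence $A_n\le C\lambda^{n-2}\int_{\R^d}\|\eta\|^{-(3d-2)/2}J_{d/2}(\|\eta\|)^2\,\dd\eta$, and the last integral is finite by \eqref{eq:finiteJint}. Since a direct computation with \eqref{eq:spectralmeasure} gives $c_d\lambda=\big(\tfrac{d-2}{2}\big)^{-2}$, we obtain $\kappa^{2n}c_d^nA_n\le C'\big(\kappa^2/(\tfrac{d-2}{2})^2\big)^n$, which is summable precisely when $\kappa<\frac{d-2}{2}$.

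The main obstacle is the borderline nature of the iteration: because every intermediate node carries the weight $\|\cdot\|^{-2}$, the naive one-step convolution $\int_{\R^d}\|\eta\|^{-2}\|\eta'-\eta\|^{-d+2}\,\dd\eta$ is logarithmically divergent at infinity (the critical case $\alpha+\beta=d$ of Lemma~\ref{le:rieszcomp}), so one cannot bound the chain factor by factor with bare Riesz kernels. The device that resolves this is to dominate by the exact eigenfunction $\Phi=\|\cdot\|^{-(d-2)/2}$, the unique homogeneity for which the weighted convolution reproduces itself; this both absorbs the logarithmic corrections coming from the $\|\eta_1\|\ge1$ cutoff in the base case and pins down the sharp per-chaos constant $\kappa^2/(\tfrac{d-2}{2})^2$, making the threshold $\kappa=\frac{d-2}{2}$ visible.
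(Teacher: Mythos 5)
Your proof is correct and is essentially the paper's argument in different clothing: the paper also reduces to the $R$-free bound via \eqref{phibound2}, then applies Lemma~\ref{le:rieszcomp} repeatedly with the exponent $\gamma=\tfrac{d-2}{2}$ --- which is precisely the statement that your $\Phi=\|\cdot\|^{-(d-2)/2}$ is a fixed point of the transfer operator $K$ with eigenvalue $k_{(d-2)/2,2,d}=\frac{1}{c_d((d-2)/2)^2}$ --- and concludes with the same double application of dominated convergence. The only real difference is your base case $h_1\le C\Phi$, which you justify by an asserted asymptotic analysis of $h_1$ (bounded near the origin, $\|\eta\|^{-d+2}\log\|\eta\|$ decay at infinity); the paper gets this in one line from the pointwise bound $\1_{B_1^c}(\eta_1)\|\eta_1\|^{-2}\le\|\eta_1\|^{-2+(d-2)/2}$ followed by a single application of Lemma~\ref{le:rieszcomp}, and substituting that bound would make your sketch fully rigorous.
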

\begin{proof}
By \eqref{phibound2},  we have that for any $R>1$, $n\geq2$, $\bm{\eta}_n\in \R^{nd}$ such that $\eta_i\neq 0$ for $i=1,\dots, n-1$,
\begin{equation}\label{eq:phibdn}
R^{2-2n}\varphi(\bm{\eta}_n/R) \leq R^{2-2n}\prod_{i=1}^{n-1}\|\eta_i/R\|^{-2}= \prod_{i=1}^{n-1}\|\eta_i\|^{-2}.
\end{equation}
It follows that for any $R>1, n\geq2$,
\begin{equation}\label{eq:jnbd}
    \mathcal{J}^{(n)}(R) \leq \int_{\R^{nd}} \1_{B_1^c}(\eta_1)  \prod_{j=1}^n \|\eta_j-\eta_{j-1}\|^{-d+2} \|\eta_n\|^{-d}J_{d/2}(\|\eta_n\|)^2 \prod_{i=1}^{n-1}\|\eta_i\|^{-2} \dd \bm{\eta}_n.
\end{equation}

We first prove that for any $n\geq 2$, there exists some constant $C(n,d)>0$ such that 
the right-hand-side of \eqref{eq:jnbd} is bounded from above by $C(n,d)$.
To show this, we use the fact that for any $x \in B_1^c$ and any $\gamma>0$, 
\[
\|x\|^{-2}\leq \|x\|^{-2+\gamma}.
\]
Thus for some $\gamma>0$ that we will choose later, we have
\begin{align*}
&\int_{\R^{nd}} \1_{B_1^c}(\eta_1)  \prod_{j=1}^n \|\eta_j-\eta_{j-1}\|^{-d+2} \|\eta_n\|^{-d}J_{d/2}(\|\eta_n\|)^2 \prod_{i=1}^{n-1}\|\eta_i\|^{-2}  \dd \bm{\eta}_n\\
    &\leq \int_{\R^{nd}} \|\eta_1\|^{-2+\gamma}  \prod_{j=1}^n \|\eta_j-\eta_{j-1}\|^{-d+2} \|\eta_n\|^{-d}J_{d/2}(\|\eta_n\|)^2 \prod_{i=2}^{n-1}\|\eta_i\|^{-2} \dd \bm{\eta}_n \\
    & = \int_{\R^{nd}} \|\eta_1\|^{-d+\gamma}  \prod_{j=2}^n \left( \|\eta_j-\eta_{j-1}\|^{-d+2} \|\eta_j\|^{-2}\right)\|\eta_n\|^{-d+2}J_{d/2}(\|\eta_n\|)^2 \dd \bm{\eta}_n.
\end{align*}
When $0<\gamma<d-2$,
for any $n\geq 2$,
applying Lemma~\ref{le:rieszcomp} $(n-1)$ times, the above equals to
\begin{equation}\label{eq:intofetan}
    (k_{\gamma,2,d})^{n-1} \int_{\R^{d}}
\|\eta_n\|^{-2d+\gamma+2} J_{d/2}(\|\eta_n\|)^2 \dd {\eta}_n.
\end{equation}
Let $\gamma = \frac{d-2}{2}$. The integral in \eqref{eq:intofetan} is finite by \eqref{eq:finiteJint}, and \[k_{\gamma,2,d} = \pi^{d/2}\frac{\Gamma(\frac{\gamma}{2})\Gamma(\frac{d-\gamma-2}{2})}{\Gamma(\frac{d-\gamma}{2})\Gamma(\frac{d-2}{2})\Gamma(\frac{2+\gamma}{2})} =\pi^{d/2}\frac{1}{\frac{d-\gamma-2}{2}\Gamma(\frac{d-2}{2})\frac{\gamma}{2}}=\frac{1}{c_d(d-2-\gamma)\gamma}=\frac{1}{c_d\gamma^2}.\]
Thus the right-hand-side of equation \eqref{eq:jnbd} is bounded from above by \[C(n,d):=\left(\frac{1}{c_d}\right)^{n-1}\left(\frac{2}{d-2}\right)^{2n-2}\int_{\R^{d}}
\|\eta\|^{-2d+\frac{d+2}{2}} J_{d/2}(\|\eta\|)^2 \dd {\eta}<\infty.\]
{Here the integral term in $C(n,d)$ does not depend on $n$. Thus  
\begin{equation}\label{eq:infinitesum}
\sum_{n=2}^{\infty}
     \kappa^{2n} c_d^n C(n,d)<\infty\end{equation}
if and only if the geometric series converges, which is equivalent to the condition $0<\kappa<\frac{d-2}{2}$.}

Now  for any $n\geq 2$, 
\begin{equation}\label{eq:jasympt} 
    \mathcal{J}^{(n)}(R)\to 0 \quad \text{as }R\to \infty.
\end{equation}
In fact, using again the bound
\eqref{eq:phibdn} and   that the right-hand-side of \eqref{eq:jnbd} is bounded, \eqref{eq:jasympt} follows from the dominated convergence theorem and \eqref{eq:limitzero}.

By \eqref{eq:jnbd}, $\mathcal{J}^{(n)}(R)\leq C(n,d)$ for all $R>1, n\geq 2$. 
The proof is complete once we apply the dominated convergence theorem again to the sum  of $\kappa^{2n}c_d^n\mathcal{J}^{(n)}(R)$ from $n=2$ to infinity  and use the results \eqref{eq:infinitesum} and \eqref{eq:jasympt}.
\end{proof}

\begin{lemma}\label{le:B1cpart}
    For  any $0<\kappa<\frac{d-2}{2}$, we have
     \[\lim_{R\to\infty}
     \sum_{n=2}^{\infty}
     \kappa^{2n} c_d^n\mathcal{K}^{(n)}(R)=0.
     \]
\end{lemma}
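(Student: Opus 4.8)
The plan is to reduce the $B_1$-region integral $\mathcal{K}^{(n)}(R)$ to the \emph{full} $(n-1)$-chaos integral at the cost of an extra factor $R^{-2}$, and then to close a self-referential inequality for the sum. The difference with Lemma~\ref{le:B1part} is that on $\{\eta_1\in B_1\}$ the bound \eqref{phibound2} applied to $\eta_1$ is useless, since $\|\eta_1\|^{-2}$ multiplied by the factor $\|\eta_1\|^{-d+2}$ already present in the integrand produces a non-integrable singularity at $\eta_1=0$. So $\eta_1$ must instead be integrated out directly.

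The key estimate is a Riesz composition restricted to the unit ball: for $\eta\neq 0$, set
\[
h(\eta):=\int_{B_1}\|\zeta\|^{-d+2}\,\|\eta-\zeta\|^{-d+2}\,\dd\zeta .
\]
I would show $h(\eta)\le C_d\|\eta\|^{-d+2}$ for all $\eta\neq0$ and a constant $C_d>0$. For $\|\eta\|\ge 2$ this follows from $\|\eta-\zeta\|\ge\|\eta\|/2$ on $B_1$ together with the finiteness of $\int_{B_1}\|\zeta\|^{-d+2}\dd\zeta$; for $\|\eta\|<2$ I would dominate $h(\eta)$ by the unrestricted composition from Lemma~\ref{le:rieszcomp} with $\alpha=\beta=2$ (valid when $d>4$, with the elementary bounded/logarithmic behaviour handled separately for $d=3,4$) and compare the resulting singularity $\|\eta\|^{-d+4}$ against $\|\eta\|^{-d+2}$, which is the larger of the two on $B_2$.

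Next is the reduction. For $n\ge2$ I would bound $\varphi(\bm{\eta}_n/R)\le\varphi(\bm{\eta}_{1,n}/R)$ via the $1$-branch of \eqref{phibound2} with $k=1$ (equivalently, dropping $e^{-w_1\|\eta_1\|^2/R^2}\le 1$ and integrating out $w_1$), so that the surviving $\varphi$ no longer depends on $\eta_1$. Since the integrand is nonnegative, Tonelli lets me integrate the two factors carrying $\eta_1$, namely $\|\eta_1\|^{-d+2}\|\eta_2-\eta_1\|^{-d+2}$, over $\eta_1\in B_1$ first; this produces exactly $h(\eta_2)$, which the key estimate bounds by $C_d\|\eta_2\|^{-d+2}$. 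Relabelling $\zeta_i:=\eta_{i+1}$ with $\zeta_0=0$, the remaining integral is \emph{precisely} the full $(n-1)$-chaos integral, and using $R^{2-2n}=R^{-2}\cdot R^{2-2(n-1)}$ one obtains
\[
\mathcal{K}^{(n)}(R)\le C_d R^{-2}\bigl[\mathcal{J}^{(n-1)}(R)+\mathcal{K}^{(n-1)}(R)\bigr].
\]
Writing $S(R):=\sum_{n\ge2}\kappa^{2n}c_d^n\mathcal{K}^{(n)}(R)$, I multiply by $\kappa^{2n}c_d^n$ and sum over $n\ge2$ to get $S(R)\le \kappa^2 c_d C_d R^{-2}\sum_{m\ge1}\kappa^{2m}c_d^m[\mathcal{J}^{(m)}(R)+\mathcal{K}^{(m)}(R)]$. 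The series $\sum_{m\ge1}\kappa^{2m}c_d^m\mathcal{J}^{(m)}(R)$ is bounded uniformly in $R$ by the estimates in the proof of Lemma~\ref{le:B1part} (this is where $0<\kappa<\frac{d-2}{2}$ enters, through \eqref{eq:infinitesum}), while $\sum_{m\ge1}\kappa^{2m}c_d^m\mathcal{K}^{(m)}(R)=\kappa^2c_d\mathcal{K}^{(1)}(R)+S(R)$ with $\mathcal{K}^{(1)}(R)$ also uniformly bounded. Because the total variance $\bbVar[\tilde u_1(B_R)]=\sum_n\bbVar[I_1^{(n)}(\1_{B_R})]$ is finite for $0<\kappa<\frac{d-2}{2}$, we know a priori that $S(R)<\infty$, so the inequality $S(R)\le \kappa^2 c_d C_d R^{-2}(\mathrm{const}+S(R))$ may be rearranged: for $R$ large enough that $\kappa^2 c_d C_d R^{-2}<\tfrac12$, this gives $S(R)\le 2\kappa^2 c_d C_d R^{-2}\cdot\mathrm{const}\to 0$.

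The \emph{main obstacle} is the key estimate $h(\eta)\le C_d\|\eta\|^{-d+2}$: handling the $B_1$-restriction uniformly in $\eta$, in particular the small-$\eta$ regime in the low dimensions $d=3,4$ where the unrestricted Riesz composition is only borderline convergent or logarithmic. The second delicate point is verifying that, after integrating out $\eta_1$, the relabelled integral reproduces $\mathcal{J}^{(n-1)}+\mathcal{K}^{(n-1)}$ \emph{exactly} — this is what makes the recursion genuinely self-improving rather than circular against the unknown growth rate of the total variance, and the self-bounding step is precisely what upgrades the merely finite quantity $S(R)$ into a quantitative $O(R^{-2})$ decay.
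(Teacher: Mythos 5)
Your proposal is correct, but it takes a genuinely different route from the paper's proof. The paper estimates each $\mathcal{K}^{(n)}(R)$ directly: it applies \eqref{phibound2} only from index $m+2$ onward (paying $R^{-2m-2}$), weakens the $\eta_1$-singularity to $\|\eta_1\|^{-d+2-\gamma'}$ on $B_1$, and then performs $n-1$ Riesz compositions with shifting exponents; the crux is a dimension-dependent choice $(m_0,\gamma_0')$ satisfying $2m_0+2-\gamma_0'=\tfrac{d-2}{2}$, which makes the per-step composition constant exactly $\tfrac{1}{c_d}\left(\tfrac{2}{d-2}\right)^2$ and hence the resulting geometric series summable precisely when $0<\kappa<\tfrac{d-2}{2}$. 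You instead prove one localized composition bound $h(\eta)\le C_d\|\eta\|^{-d+2}$, integrate $\eta_1$ out to obtain the exact recursion $\mathcal{K}^{(n)}(R)\le C_dR^{-2}\bigl[\mathcal{J}^{(n-1)}(R)+\mathcal{K}^{(n-1)}(R)\bigr]$, and close a self-bounding inequality for $S(R)$. This delegates all of the $\kappa$-dependent summability to the $\mathcal{J}$-side, i.e.\ to Lemma~\ref{le:B1part} and \eqref{eq:infinitesum}, so your constant $C_d$ never needs to be sharp --- any dimensional constant is beaten by the gained factor $R^{-2}$ --- and you avoid the paper's exponent bookkeeping entirely; both arguments in fact yield the same quantitative rate $O(R^{-2})$. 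The trade-offs: (i) your key estimate requires the separate elementary treatment you flag in low dimensions, and it does go through (in $d=3$ the restricted integral is uniformly bounded, in $d=4$ it is at most logarithmic in $1/\|\eta\|$, and both are $O(\|\eta\|^{-d+2})$ on $B_2$; for $d\ge5$ Lemma~\ref{le:rieszcomp} applies); (ii) your argument is not self-contained, since it imports both the uniform-in-$R$ bound on $\sum_{m\ge1}\kappa^{2m}c_d^m\mathcal{J}^{(m)}(R)$ from Lemma~\ref{le:B1part} and the a priori finiteness $S(R)<\infty$ from the $L^2$-convergence of the chaos series in \cite{mueller2004singular}. That last dependence (and any residual worry about circularity in the rearrangement) can be eliminated by unrolling the recursion down to the base case, $\mathcal{K}^{(n)}(R)\le\sum_{j=1}^{n-1}(C_dR^{-2})^j\mathcal{J}^{(n-j)}(R)+(C_dR^{-2})^{n-1}\mathcal{K}^{(1)}(R)$, and summing these bounds directly instead of solving for $S(R)$.
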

\begin{proof}
Similar to the proof of Lemma~\ref{le:B1part}, we first show that, for any $R>1$, $n\geq 2$, $\mathcal{K}^{(n)}(R) \leq C'(n,d)$ for some constant $C'(n,d)>0$ . In order to have $\kappa^{2n}c_d^nC'(n,d)$  be summable from $n=2$ to infinity for all $0<\kappa<\frac{d-2}{2}$, we conduct the following analysis.

Let $R>1, n\geq 2$, and $m$ be an integer so that $0\leq m\leq n-2$. 
By \eqref{phibound2}, 
for any  
$\bm{\eta}_n\in \R^{nd}$
such that
$\eta_i\neq 0$ for $i=m+2,\dots, n-1$, 
we have
\begin{equation}\label{eq:phibdmn}
    R^{2-2n}\varphi(\bm{\eta}_n/R) \leq R^{2-2n}\prod_{i=m+2}^{n-1}\|\eta_i/R\|^{-2}= R^{-2m-2 }\prod_{i=m+2}^{n-1}\|\eta_i\|^{-2}.
\end{equation}
It follows that for any $R>1, n\geq2$ and $0\leq m \leq n-2$,
\begin{equation}\label{eq:kndmn}
    \mathcal{K}^{(n)}(R) \leq R^{-2m-2}\int_{\R^{nd}} \1_{B_1}(\eta_1)  \prod_{j=1}^n \|\eta_j-\eta_{j-1}\|^{-d+2} \|\eta_n\|^{-d}J_{d/2}(\|\eta_n\|)^2 \prod_{i=m+2}^{n-1}\|\eta_i\|^{-2}\dd \bm{\eta}_n.
\end{equation}
Note that for any $x \in B_1$ and any $\gamma'\geq 0$, 
\[
\|x\|^{-d+2}\leq \|x\|^{-d+2-\gamma'}.
\]
Thus for some $\gamma' \geq 0$ and some $0\leq m \leq n-2$ that we will choose later, we have
\begin{align*}
&\int_{\R^{nd}} \1_{B_1}(\eta_1)  \prod_{j=1}^n \|\eta_j-\eta_{j-1}\|^{-d+2} \|\eta_n\|^{-d}J_{d/2}(\|\eta_n\|)^2 \prod_{i=m+2}^{n-1}\|\eta_i\|^{-2}  \dd \bm{\eta}_n\\
    &\leq \int_{\R^{nd}} \|\eta_1\|^{-d+2-\gamma'}  \prod_{j=2}^n \|\eta_j-\eta_{j-1}\|^{-d+2} \|\eta_n\|^{-d}J_{d/2}(\|\eta_n\|)^2 \prod_{i=m+2}^{n-1}\|\eta_i\|^{-2} \dd \bm{\eta}_n.
\end{align*}
When $0<2-\gamma'<d-2(m+1)$, applying Lemma~\ref{le:rieszcomp} $(m+1)$ times,
the above equals to 
\begin{align*}
    \prod_{i=0}^{m}k_{2i+2-\gamma',2,d}&\int_{\R^{(n-m-1)d}}\|\eta_{m+2}\|^{-d+2m+4-\gamma'}\\&\quad\quad
\prod_{j=m+3}^n \|\eta_j-\eta_{j-1}\|^{-d+2} \|\eta_n\|^{-d}J_{d/2}(\|\eta_n\|)^2 \prod_{i=m+2}^{n-1}\|\eta_i\|^{-2} \dd \bm{\eta}_{m+1,n}.
\end{align*}
Then again applying Lemma~\ref{le:rieszcomp} $(n-2-m)$ times, the above further equals to
\begin{equation}\label{eq:kbdmnfinal}
\prod_{i=0}^{m}k_{2i+2-\gamma',2,d}\left(k_{2m+2-\gamma',2,d}\right)^{n-2-m}\int_{\R^{d}}\|\eta_{n}\|^{-2d+2m+4-\gamma'} J_{d/2}(\|\eta_n\|)^2  \dd {\eta}_n.
\end{equation}
Note that when $0<2m+4-\gamma'<d$, the integral in \eqref{eq:kbdmnfinal} is finite by \eqref{eq:finiteJint}, and
\begin{equation*}\label{eq:kmgamma}
\begin{aligned}
    k_{2m+2-\gamma',2,d} &= \pi^{d/2}\frac{\Gamma\left(\frac{2m+2-\gamma'}{2}\right)\Gamma\left(\frac{d-2-(2m+2-\gamma')}{2}\right)}{\Gamma\left(\frac{d-(2m+2-\gamma')}{2}\right)\Gamma\left(\frac{d-2}{2}\right)\Gamma\left(\frac{2+(2m+2-\gamma')}{2}\right)} \\&=
\pi^{d/2}\frac{1}{\left(\frac{d-2-(2m+2-\gamma')}{2}\right)\Gamma\left(\frac{d-2}{2}\right)\left(\frac{2m+2-\gamma'}{2}\right)} \\&=
 \frac{1}{c_d(d-2-(2m+2-\gamma'))(2m+2-\gamma')}.
\end{aligned} 
 \end{equation*}
 We need to choose $m$ and $\gamma'$ in some clever way so that the series $\kappa^{2n}c_d^n\left(k_{2m+2-\gamma',2,d}\right)^{n-2-m}$ is summable.
 The crucial observation here is that for any $d\geq 3$, we can find an integer $m_0\geq 0$ and some $ \gamma'_0\in \{0,1/2,1,3/2\}$ such that \begin{equation}\label{eq:mgammaCon}
2m_0+2-\gamma'_0 = \frac{d-2}{2}.
 \end{equation}
In fact, let $m_0$ be the largest nonnegative integer such that $4m_0\leq d-3$ and let $\gamma'_0=\frac{3}{2}+\frac{4m_0-(d-3)}{2}$. It is easy to see that $0\leq \gamma_0'<2$,  \eqref{eq:mgammaCon} is satisfied, and $0<2m_0+4-\gamma_0'<d$ when $d\geq 3$. The values of $m_0$ and $\gamma'_0$ only depend on the dimension $d$. With $m=m_0, \gamma'=\gamma'_0$, we have
\[k_{2m+2-\gamma',2,d} = \frac{1}{c_d (\frac{d-2}{2})^2},\]
and $\kappa^{2}c_dk_{2m+2-\gamma',2,d}<1$  for all $0<\kappa<\frac{d-2}{2}$. 

Note that $m_0$ only depends on the dimension $d$ and could be greater than some small $n$, so we need to choose $m$ differently according to the values of $n$. For any $2\leq n \leq m_0+2$, we choose $m=0$ and $\gamma'=3/2$; for any $n > m_0+2$, we choose $m=m_0$ and $\gamma'=\gamma'_0$. With these choices,  the  conditions 
$\gamma'\geq 0$, $0\leq m \leq n-2$ and $0<2-\gamma'<d-2(m+1)$ 
 are always satisfied, and $\kappa^{2n}c_d^n\left(k_{2m+2-\gamma',2,d}\right)^{n-2-m}$ is summable from $n=2$ to infinity for all $0<\kappa<\frac{d-2}{2}$. 
 By \eqref{eq:kndmn}, define
 $
 C'(n,d)$ as the value of \eqref{eq:kbdmnfinal} under such choices of $m$ and $\gamma'$,
 we have $\mathcal{K}^{(n)}(R)\leq C'(n,d)$ for any $R>1$ and $n\geq 2$, and also 
 \[\sum_{n=2}^\infty \kappa^{2n}c_d^nC'(n,d)<\infty,\]   for any $0<\kappa<\frac{d-2}{2}$.

It also follows from \eqref{eq:kndmn} that for any $n\geq 2$, $\mathcal{K}^{(n)}(R)\leq R^{-2m-2}C'(n,d)$, so  $\mathcal{K}^{(n)}(R)\to 0$ as $R\to \infty$. The proof is thus complete once we apply the dominated convergence theorem to the sum of $\kappa^{2n}c_d^n\mathcal{K}^{(n)}(R)$ from $n=2$ to infinity and use the above results together.
\end{proof}

%
%

\section*{Acknowledgments}
We thank Yu Gu for introducing us to the paper \cite{mueller2004singular}, encouraging us to study this problem, and for many {helpful} discussions. We also thank Carl Mueller, Guangqu Zheng, and Davar Khoshnevisan for {insightful} conversations. R.T. is partially supported by Yu Gu's NSF grant DMS-2203014. This work started taking its shape during the workshop ``Universality and Integrability in KPZ'' at Columbia University, March 15–19, 2024, which was supported by NSF grant DMS-2400990.   We also thank the referees for helpful suggestions.

\bibliographystyle{alpha}
\footnotesize\bibliography{ref}    

@article{bolanos2021averaging,
  title={Averaging 2d stochastic wave equation},
  author={Bola{\~n}os Guerrero, Raul and Nualart, David and Zheng, Guangqu},
  journal={Electronic Journal of Probability},
  volume={26},
  pages={1--32},
  year={2021},
  publisher={The Institute of Mathematical Statistics and the Bernoulli Society}
}

@article{dalang1999extending,
    AUTHOR = {Dalang, Robert C.},
     TITLE = {Extending the martingale measure stochastic integral with
              applications to spatially homogeneous s.p.d.e.'s},
   JOURNAL = {Electron. J. Probab.},
  FJOURNAL = {Electronic Journal of Probability},
    VOLUME = {4},
      YEAR = {1999},
     PAGES = {no. 6, 29},
      ISSN = {1083-6489},
   MRCLASS = {60H05 (35R60 60G15 60G48 60H15)},
  MRNUMBER = {1684157},
MRREVIEWER = {Marta\ Sanz Sol\'e},
       DOI = {10.1214/EJP.v4-43},
       URL = {https://doi.org/10.1214/EJP.v4-43},
}

@article {nxzquanti,
    AUTHOR = {Nualart, David and Xia, Panqiu and Zheng, Guangqu},
     TITLE = {Quantitative central limit theorems for the parabolic
              {A}nderson model driven by colored noises},
   JOURNAL = {Electron. J. Probab.},
  FJOURNAL = {Electronic Journal of Probability},
    VOLUME = {27},
      YEAR = {2022},
     PAGES = {Paper No. 120, 43},
      ISSN = {1083-6489},
   MRCLASS = {60H15 (60F05 60G22 60H07)},
  MRNUMBER = {4479916},
MRREVIEWER = {Adam\ Matthew\ Bowditch},
       DOI = {10.1214/22-ejp847},
       URL = {https://doi.org/10.1214/22-ejp847},
}

@article{duch2021flow,
  title={Flow equation approach to singular stochastic PDEs},
  author={Duch, Pawe{\l}},
  journal={arXiv preprint arXiv:2109.11380},
  year={2021}
}

@article{csz1,
author = {Francesco Caravenna and Rongfeng Sun and Nikos Zygouras},
title = {{Universality in marginally relevant disordered systems}},
volume = {27},
journal = {The Annals of Applied Probability},
number = {5},
publisher = {Institute of Mathematical Statistics},
pages = {3050--3112},
keywords = {Directed polymer, Disordered system, Fourth moment theorem, marginal disorder relevance, pinning model, polynomial chaos, Stochastic heat equation},
year = {2017},
doi = {10.1214/17-AAP1276},
URL = {https://doi.org/10.1214/17-AAP1276}
}

@article{csz2,
  author    = { Francesco Caravenna and Rongfeng Sun and Nikos Zygouras},
  title     = {The critical 2d Stochastic Heat Flow},
  journal   = {Inventiones mathematicae},
  year      = {2023},
  volume    = {233},
  pages     = {325--460},
  doi       = {10.1007/s00222-023-01184-7},
  url       = {https://doi.org/10.1007/s00222-023-01184-7}
}

@article{gerolla2023fluctuations,
  title={Fluctuations of stochastic PDEs with long-range correlations},
  author={Gerolla, Luca and Hairer, Martin and Li, Xue-Mei},
  journal={arXiv preprint arXiv:2303.09811},
  year={2023}
}

@article{gu2020fluctuations,
  title={Fluctuations of a nonlinear stochastic heat equation in dimensions three and higher},
  author={Gu, Yu and Li, Jiawei},
  journal={SIAM Journal on Mathematical Analysis},
  volume={52},
  number={6},
  pages={5422--5440},
  year={2020},
  publisher={SIAM}
}

@article{gubinelli2015paracontrolled,
    AUTHOR = {Gubinelli, Massimiliano and Imkeller, Peter and Perkowski,
              Nicolas},
     TITLE = {Paracontrolled distributions and singular {PDE}s},
   JOURNAL = {Forum Math. Pi},
  FJOURNAL = {Forum of Mathematics. Pi},
    VOLUME = {3},
      YEAR = {2015},
     PAGES = {e6, 75},
      ISSN = {2050-5086},
   MRCLASS = {60H15 (35S50)},
  MRNUMBER = {3406823},
       DOI = {10.1017/fmp.2015.2},
       URL = {https://doi.org/10.1017/fmp.2015.2},
}

@article{hairer2014theory,
  title={A theory of regularity structures},
  author={Hairer, Martin},
  journal={Inventiones mathematicae},
  volume={198},
  number={2},
  pages={269--504},
  year={2014},
  publisher={Springer}
}

@article{huang2020gaussian,
  title={Gaussian fluctuations for the stochastic heat equation with colored noise},
  author={Huang, Jingyu and Nualart, David and Viitasaari, Lauri and Zheng, Guangqu},
  journal={Stochastics and Partial Differential Equations: Analysis and Computations},
  volume={8},
  pages={402--421},
  year={2020},
  publisher={Springer}
}

@article{huang2020central,
  title={A central limit theorem for the stochastic heat equation},
  author={Huang, Jingyu and Nualart, David and Viitasaari, Lauri},
  journal={Stochastic Processes and Their Applications},
  volume={130},
  number={12},
  pages={7170--7184},
  year={2020},
  publisher={Elsevier}
}

@book{ito,
  author       = {Ito, Kiyosi},
  title        = {Foundations of Stochastic Differential Equations in Infinite Dimensional Spaces},
  year         = {1984},
  series       = {CBMS-NSF Regional Conference Series in Applied Mathematics},
  volume       = {47},
  publisher    = {SIAM},
}

@article{kim2022limit,
AUTHOR = {Kim, Kunwoo and Yi, Jaeyun},
     TITLE = {Limit theorems for time-dependent averages of nonlinear
              stochastic heat equations},
   JOURNAL = {Bernoulli},
  FJOURNAL = {Bernoulli. Official Journal of the Bernoulli Society for
              Mathematical Statistics and Probability},
    VOLUME = {28},
      YEAR = {2022},
    NUMBER = {1},
     PAGES = {214--238},
      ISSN = {1350-7265,1573-9759},
   MRCLASS = {60H15 (60F05 60F15)},
  MRNUMBER = {4337703},
       DOI = {10.3150/21-bej1339},
       URL = {https://doi.org/10.3150/21-bej1339},
}

@book{lebedev1972special,
  title={Special Functions and Their Applications},
  author={Lebedev, Nikolai N.},
  year={1972},
  edition={Revised English Edition},
  publisher={Dover Publications},
  address={New York},
}

@article {mueller2004singular,
    AUTHOR = {Mueller, Carl and Tribe, Roger},
     TITLE = {A singular parabolic {A}nderson model},
   JOURNAL = {Electron. J. Probab.},
  FJOURNAL = {Electronic Journal of Probability},
    VOLUME = {9},
      YEAR = {2004},
     PAGES = {no. 5, 98--144},
      ISSN = {1083-6489},
   MRCLASS = {60H15 (35R60 82B44)},
  MRNUMBER = {2041830},
MRREVIEWER = {Pedro\ Mar\'in Rubio},
       DOI = {10.1214/EJP.v9-189},
       URL = {https://doi.org/10.1214/EJP.v9-189},
}

@article {davarCLT,
    AUTHOR = {Chen, Le and Khoshnevisan, Davar and Nualart, David and Pu,
              Fei},
     TITLE = {Central limit theorems for spatial averages of the stochastic
              heat equation via {M}alliavin-{S}tein's method},
   JOURNAL = {Stoch. Partial Differ. Equ. Anal. Comput.},
  FJOURNAL = {Stochastic Partial Differential Equations. Analysis and
              Computations},
    VOLUME = {11},
      YEAR = {2023},
    NUMBER = {1},
     PAGES = {122--176},
      ISSN = {2194-0401,2194-041X},
   MRCLASS = {60H15 (35K05 60F05 60H07)},
  MRNUMBER = {4563698},
MRREVIEWER = {Jorge\ A.\ Le\'on},
       DOI = {10.1007/s40072-021-00224-8},
       URL = {https://doi.org/10.1007/s40072-021-00224-8},
}

@article {nualart2020spatial,
    AUTHOR = {Nualart, David and Song, Xiaoming and Zheng, Guangqu},
     TITLE = {Spatial averages for the parabolic {A}nderson model driven by
              rough noise},
   JOURNAL = {ALEA Lat. Am. J. Probab. Math. Stat.},
  FJOURNAL = {ALEA. Latin American Journal of Probability and Mathematical
              Statistics},
    VOLUME = {18},
      YEAR = {2021},
    NUMBER = {1},
     PAGES = {907--943},
      ISSN = {1980-0436},
   MRCLASS = {60H15 (60F05 60G15 60H07)},
  MRNUMBER = {4243520},
MRREVIEWER = {Yong\ Liu},
       DOI = {10.30757/alea.v18-33},
       URL = {https://doi.org/10.30757/alea.v18-33},
}

@article {nualartZheng,
    AUTHOR = {Nualart, David and Zheng, Guangqu},
     TITLE = {Averaging {G}aussian functionals},
   JOURNAL = {Electron. J. Probab.},
  FJOURNAL = {Electronic Journal of Probability},
    VOLUME = {25},
      YEAR = {2020},
     PAGES = {Paper No. 48, 54},
      ISSN = {1083-6489},
   MRCLASS = {60H15 (60F05 60G15 60H07)},
  MRNUMBER = {4092767},
MRREVIEWER = {Ciprian\ A.\ Tudor},
       DOI = {10.1214/20-ejp453},
       URL = {https://doi.org/10.1214/20-ejp453},
}

@article{otto2021priori,
  title={A priori bounds for quasi-linear SPDEs in the full sub-critical regime},
  author={Otto, Felix and Sauer, Jonas and Smith, Scott and Weber, Hendrik},
  journal={arXiv preprint arXiv:2103.11039},
  year={2021}
}

@incollection{walsh,
  author    = {John B. Walsh},
  title     = {An introduction to stochastic partial differential equations},
  booktitle = {Ecole D'\'et\'e de Probabilit\'es de Saint-Flour XIV - 1984},
  series    = {Lecture Notes in Mathematics},
  volume    = {1180},
  pages     = {265--439},
  publisher = {Springer},
  address   = {Berlin},
  year      = {1986}
}

@book {nualart2006malliavin,
    AUTHOR = {Nualart, David},
     TITLE = {The {M}alliavin calculus and related topics},
    SERIES = {Probability and its Applications (New York)},
 PUBLISHER = {Springer-Verlag, New York},
      YEAR = {1995},
     PAGES = {xii+266},
      ISBN = {0-387-94432-X},
   MRCLASS = {60H07 (60-02)},
  MRNUMBER = {1344217},
MRREVIEWER = {Daniel\ Ocone},
       DOI = {10.1007/978-1-4757-2437-0},
       URL = {https://doi.org/10.1007/978-1-4757-2437-0},
}

@article{10.1214/20-AIHP1069,
author = {Francisco Delgado-Vences and David Nualart and Guangqu Zheng},
title = {{A Central Limit Theorem for the stochastic wave equation with fractional noise}},
volume = {56},
journal = {Annales de l'Institut Henri Poincaré, Probabilités et Statistiques},
number = {4},
publisher = {Institut Henri Poincaré},
pages = {3020 -- 3042},
keywords = {central limit theorem, Malliavin calculus, Stein’s method, Stochastic wave equation},
year = {2020},
doi = {10.1214/20-AIHP1069},
URL = {https://doi.org/10.1214/20-AIHP1069}
}

@article {heatRosen,
    AUTHOR = {Dhoyer, R\'{e}mi and Tudor, Ciprian A.},
     TITLE = {Spatial average for the solution to the heat equation with
              {R}osenblatt noise},
   JOURNAL = {Stoch. Anal. Appl.},
  FJOURNAL = {Stochastic Analysis and Applications},
    VOLUME = {40},
      YEAR = {2022},
    NUMBER = {6},
     PAGES = {951--966},
      ISSN = {0736-2994},
   MRCLASS = {60H15 (60F05 60G15 60H07)},
  MRNUMBER = {4500121},
MRREVIEWER = {Mounir Zili},
       DOI = {10.1080/07362994.2021.1972008},
       URL = {https://doi.org/10.1080/07362994.2021.1972008},
}

@article {waverosen,
    AUTHOR = {Dhoyer, R. and Tudor, C. A.},
     TITLE = {Non-central limit theorem for the spatial average of the
              solution to the wave equation with {R}osenblatt noise},
   JOURNAL = {Theory Probab. Math. Statist.},
  FJOURNAL = {Theory of Probability and Mathematical Statistics},
    NUMBER = {106},
      YEAR = {2022},
     PAGES = {105--119},
      ISSN = {0094-9000},
   MRCLASS = {60H15 (60F05 60G15 60H07)},
  MRNUMBER = {4438446},
       DOI = {10.1090/tpms/1167},
       URL = {https://doi.org/10.1090/tpms/1167},
}

@incollection {ramirez,
    AUTHOR = {Ben Arous, G\'{e}rard and Molchanov, Stanislav and Ram\'{\i}rez,
              Alejandro F.},
     TITLE = {Stable limit laws for reaction-diffusion in random
              environment},
 BOOKTITLE = {Probability and analysis in interacting physical systems},
    SERIES = {Springer Proc. Math. Stat.},
    VOLUME = {283},
     PAGES = {123--171},
 PUBLISHER = {Springer, Cham},
      YEAR = {2019},
   MRCLASS = {60K37 (60J80 82B41)},
  MRNUMBER = {3968510},
       DOI = {10.1007/978-3-030-15338-0\_5},
       URL = {https://doi.org/10.1007/978-3-030-15338-0_5},
}

@article {cosco,
    AUTHOR = {Cosco, Cl\'{e}ment and Nakajima, Shuta and Nakashima, Makoto},
     TITLE = {Law of large numbers and fluctuations in the sub-critical and
              {$L^2$} regions for {SHE} and {KPZ} equation in dimension
              {$d\geq3$}},
   JOURNAL = {Stochastic Process. Appl.},
  FJOURNAL = {Stochastic Processes and their Applications},
    VOLUME = {151},
      YEAR = {2022},
     PAGES = {127--173},
      ISSN = {0304-4149},
   MRCLASS = {60K37 (60F05 60G44 60H15 60K35 82D60)},
  MRNUMBER = {4441505},
       DOI = {10.1016/j.spa.2022.05.010},
       URL = {https://doi.org/10.1016/j.spa.2022.05.010},
}

@article {cosco2,
    AUTHOR = {Cosco, Cl\'{e}ment and Nakajima, Shuta},
     TITLE = {Gaussian fluctuations for the directed polymer partition
              function in dimension {$d \geq 3$} and in the whole
              {$L^2$}-region},
   JOURNAL = {Ann. Inst. Henri Poincar\'{e} Probab. Stat.},
  FJOURNAL = {Annales de l'Institut Henri Poincar\'{e} Probabilit\'{e}s et
              Statistiques},
    VOLUME = {57},
      YEAR = {2021},
    NUMBER = {2},
     PAGES = {872--889},
      ISSN = {0246-0203},
   MRCLASS = {60K37 (60F05 82D60)},
  MRNUMBER = {4260488},
       DOI = {10.1214/20-aihp1100},
       URL = {https://doi.org/10.1214/20-aihp1100},
}


\end{document}